\documentclass[
a4paper, 
reqno, 
oneside, 
12pt
]{amsart}

\usepackage[utf8]{inputenc}

\usepackage[dvipsnames]{xcolor} 
\colorlet{cite}{red}
\definecolor{pgreen}{rgb}{0,0.4,0.2}
\usepackage{tikz}  

\usetikzlibrary{arrows,positioning}
\tikzset{ 
	baseline=-2.3pt,
	text height=1.5ex, text depth=0.25ex,
	>=stealth,
	node distance=2cm,
	mid/.style={fill=white,inner sep=2.5pt},
}

\usepackage{lmodern}
\usepackage{tikz-cd}
\usepackage{amsthm, amssymb, amsfonts}

\usepackage{amsthm, amssymb, amsfonts}	
\usepackage[all]{xy}
\usepackage{graphicx,caption,subcaption}
\usepackage{braket}  
\usepackage{microtype}
\usepackage{DotArrow}
\usepackage[makeroom]{cancel}
\usepackage[%
bookmarks=true,			
unicode=true,			
pdftitle={LiftingProblems},		%
pdfauthor={Herrera, deMelo, Valencia},	%
pdfkeywords={Lifting homotopies, orbit spaces},	
colorlinks=true,		
linkcolor=black,			
citecolor=black,		
filecolor=magenta,		
urlcolor=RoyalBlue			
]{hyperref}				
\usepackage{cleveref}

\newtheoremstyle{mydef}
{}		
{}		
{}		
{}		
{\scshape}	
{. }		
{ }		
{\thmname{#1}\thmnumber{ #2}\thmnote{ #3}}	

\newtheorem{theorem}{Theorem}[section]
\newtheorem*{theorem*}{Theorem}
\newtheorem{proposition}[theorem]{Proposition}
\newtheorem*{proposition*}{Proposition}
\newtheorem{lemma}[theorem]{Lemma}
\newtheorem*{lemma*}{Lemma}
\newtheorem{corollary}[theorem]{Corollary}
\newtheorem*{corollary*}{Corollary}
\theoremstyle{definition}

\theoremstyle{remark}
\newtheorem{remark}[theorem]{Remark}

\newtheorem*{conjecture*}{Conjecture}
\usepackage{tikz}

\def\to{\rightarrow}
\def\toto{\rightrightarrows}

\def\act{\curvearrowright}

\newcommand{\rr}{\rightrightarrows}

\author{M. de Melo, J. S. Herrera-Carmona, F. Valencia}
\subjclass[2020]{58H05, 57N80, 57S15}
\address{}
\date{\today}

\address{Mateus de Melo - Departamento de Matemática, Universidade Federal do Espírito Santo, Avenida Fernando Ferrari 514, Vitória, 29075-910, Espírito Santo - Brazil.
	\vspace{0.5cm}
	\newline  
	Juan Sebastian Herrera-Carmona - Departamento de Matemática, Universidade Federal do Paraná, Setor de Ciências Exatas - Centro Politécnico, Curitiba, 81531-980 Paraná - Brazil. 
	\vspace{0.5cm}
	\newline  
	Fabricio Valencia - Instituto de Matem\'atica e Estat\'istica, Universidade de S\~ao Paulo, Rua do Mat\~ao 1010, Cidade Universit\'aria, S\~ao Paulo, 05508-090 S\~ao Paulo - Brazil.
	\vspace{0.5cm}
	\newline  
	\phantom{xx}
	mateus.melo@ufes.br, sebastianherrera@ufpr.br, fabricio.valencia@ime.usp.br}

\title{Stratified vector fields on orbit spaces}

\begin{document}
	\maketitle

	\begin{abstract}
Using Morita type stratifications, we establish a one-to-one correspondence between geometric vector fields on a separated differentiable stack and stratified vector fields on its orbit space. This correspondence enables us to derive a stacky version of the generalized Gauss lemma and to prove a smooth version of Palais' covering isotopy theorem for a class of proper Lie groupoids, thereby extending the classical result for proper Lie group actions.
	\end{abstract}
	
	\section{Introduction}

A stratified space can be roughly understood as a well-behaved topological space equipped with a partition into smooth manifolds \cite{Pflaum0}. This concept is particularly useful in the study of certain types of singular spaces, such as those arising as quotient spaces of well-behaved singular foliations, as for instance, orbit spaces of proper Lie group actions \cite{Davis,Kankaanrinta,GSchwarz}, or, more generally, orbit spaces of proper Lie groupoids \cite{CM}. In fact, there exist several stratifications of the base of a proper Lie groupoid that induce stratifications on its orbit space (see, e.g., \cite{CM,PosthumaTangWang}). However, the stratification by Morita types studied in \cite{CM} is canonical in the sense that it depends only on the transversal information associated with the Lie groupoid. On the one hand, the stratification by Morita types is motivated by the notion of normal orbit type stratification, which was initially developed by Schwarz in \cite{GSchwarz} (see also \cite{Davis}) for actions of compact Lie groups, and later explored by Kankaanrinta in the case of proper actions of non-compact Lie groups \cite{Kankaanrinta}. On the other hand, since the stratification by Morita types depends only on the transversal information of a Lie groupoid, it can be used to extend various topological techniques to the stacky setting. This has been recently illustrated by Ortiz and Valencia in the context of Morse theory for Lie groupoids and their differentiable stacks \cite{Valencia}, where a connection is established with the classical theory of stratified Morse theory developed by Goresky and MacPherson in \cite{GM}.

It is well known that stratified vector fields on the orbit space of a proper Lie group action on a manifold can be described in terms of invariant vector fields on the manifold \cite{Davis,Kankaanrinta,GSchwarz}. As a notable consequence, a smooth version of Palais’ covering homotopy theorem (see \cite{Palais1,Palais2}) holds for proper actions of Lie groups. These results naturally rely on the well-behaved properties of the normal orbit type stratification that can be defined in this context.

The aim of this paper is to extend the above result to the orbit space of a proper Lie groupoid, taking into account the canonical stratification by Morita types. To be precise, we first introduce some terminology. Let $G\rr M$ be a proper Lie groupoid and let $X=M/G$ denote its orbit space. We say that points in $M$ are Morita type equivalent if their normal isotropy representations are isomorphic. 
Passing to the connected components of the corresponding partition defines a canonical Whitney stratification of $M$, which in turn induces a canonical Whitney stratification of the orbit space $X$. 
Therefore, we can view $X$ as a smooth stratified space whose strata are completely determined by the Morita types in $M$. If $C^\infty(X)$ denotes the algebra of smooth functions on $X$ then we define the Lie algebra of stratified vector fields $\mathfrak{X}^\infty(X)$ as the subalgebra of $\mathfrak{Der}(C^{\infty}(X))$ consisting of derivations that preserve the ideals of $C^{\infty}(X)$ vanishing along the strata. A multiplicative vector field on $G$ is a vector field whose flow consists of local Lie groupoid automorphisms of $G$. 
The space of all multiplicative vector fields on $G$ forms a Lie algebra, which we denote by $\mathfrak{X}_{m}(G)$. 
More importantly, there exists a crossed module of Lie algebras $(\mathfrak{X}_m(G),\Gamma(A),\delta,D)$, where $A$ is the Lie algebroid of $G$, which allows us to define geometric vector fields on the separated differentiable stack $[M/G]$ presented by $G\rr M$ as elements of the quotient Lie algebra $\mathfrak{X}([M/G]):=\mathfrak{X}_{m}(G)/\textnormal{im}(\delta)$.

In these terms, the first main result of this paper can be stated as follows.

\begin{theorem*}[A]
The canonical projection $\pi:M\to X$ induces a Lie algebra homomorphism $\pi_*:\mathfrak{X}_{m}(G) \to \mathfrak{Der}(C^{\infty}(X))$ such that its image is the Lie algebra of preserving vector fields $\mathfrak{X}^{\infty}(X)$. Moreover, there is an exact sequence of Lie algebras:
\begin{center}
	\begin{tikzcd}
		0\arrow[r]&\mathrm{im}(\delta)\arrow[r, hook]&\mathfrak{X}_{m}(G)\arrow[r,"\pi_*"]&\mathfrak{X}^{\infty}(X)\arrow[r]&0.
	\end{tikzcd}
\end{center}

\end{theorem*}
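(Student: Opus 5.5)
Here is the plan. I describe a multiplicative vector field $\xi\in\mathfrak{X}_m(G)$ as a pair $(\xi,V)$, where $V\in\mathfrak{X}(M)$ is the base vector field, $\xi$ is $s$- and $t$-related to $V$, and the flow $\phi^\xi_\epsilon$ consists of local groupoid automorphisms covering the flow $\phi^V_\epsilon$.

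\textbf{Step 1: definition of $\pi_*$.} Since $\phi^V_\epsilon$ is covered by groupoid automorphisms, it maps orbits to orbits. Hence $V$ preserves the algebra $C^\infty(M)^G$ of $G$-invariant functions: if $f\in C^\infty(M)^G$ then $V(f)\in C^\infty(M)^G$. For the second claim, $f\circ t=f\circ s$ gives $V(f)\circ t=\xi(f\circ t)=\xi(f\circ s)=V(f)\circ s$. Identifying $C^\infty(X)\cong C^\infty(M)^G$, I set $\pi_*(\xi)(f):=V(\pi^*f)$. This is a derivation of $C^\infty(X)$. It is a Lie algebra map because $\xi\mapsto V$ is one, by $s$-relatedness.

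\textbf{Step 2: the image lies in $\mathfrak{X}^\infty(X)$.} The local automorphisms $\phi^\xi_\epsilon$ preserve normal isotropy representations up to isomorphism. Therefore $\phi^V_\epsilon$ preserves Morita types, and by connectedness it preserves each stratum locally. So $V$ is tangent to the saturated strata of $M$, and $\pi_*\xi$ preserves the ideals of functions vanishing on the strata of $X$.

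\textbf{Step 3: the kernel is $\mathrm{im}(\delta)$.} By definition $\delta(\alpha)=\overrightarrow{\alpha}+\overleftarrow{\alpha}$, whose base field is $\rho(\alpha)$. This field is tangent to the orbits, so it kills invariant functions; hence $\mathrm{im}(\delta)\subseteq\ker\pi_*$. For the converse, assume $\pi_*\xi=0$. Because proper groupoids admit enough invariant functions, which separate the orbits and give them as fibers near each orbit via a linearization, $V$ must then be tangent to the orbits. Write $V=\rho(\alpha)$ locally using the linearization $G|_U\cong \mathcal{G}_x\ltimes \ldots$, then glue these local pieces with an invariant partition of unity, which exists by properness. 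Then $\xi-\delta(\alpha)$ has base field zero. A multiplicative vector field with zero base field is of the form $\overrightarrow{\beta}-\overleftarrow{\beta}$ with $\beta$ a section of the isotropy, and properness (averaging with a Haar system) lets one write it as $\delta$ of a section. This converse inclusion is the technical point I expect to need care with.

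\textbf{Step 4: surjectivity, the main obstacle.} Given $Y\in\mathfrak{X}^\infty(X)$, I need to lift it to a multiplicative $\xi$. I proceed in three stages.
\begin{enumerate}
\item Work locally. Take a slice $S_x$ through $x$ with the linear model $G_x\ltimes N_x$, so that near $\pi(x)$ the space $X$ is $N_x/G_x$ with $G_x$ compact. The Schwarz/Kankaanrinta theorem for compact group actions then lifts $Y$ to a $G_x$-invariant vector field on $N_x$.
\item Extend to a multiplicative vector field on the restriction of $G$ to the saturation of the slice, using Morita invariance of the stack: invariant vector fields on the action groupoid correspond to multiplicative fields on it, and these transport through the Morita equivalence.
\item Glue the local lifts by an invariant partition of unity $\{\chi_i\}$ pulled back from $X$. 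This is legitimate because $\chi\xi$ is still multiplicative when $\chi$ is $G$-invariant, since $\chi\circ s=\chi\circ t$.
\end{enumerate}
Stratification-preservation of $Y$ is exactly the hypothesis the compact case requires, and the stratification by Morita types matches the normal orbit type stratification of $N_x/G_x$ near the origin. Checking this identification and the multiplicative extension across the Morita equivalence are where I expect most of the work to lie.
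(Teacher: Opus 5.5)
Your Steps 1, 2 and 4 follow the paper's route. Step 4 is essentially the paper's argument: Schwarz's equivariant lifting on the slice representation, then transport across the Morita equivalence with $G_x\ltimes\nu_x(\mathcal{O}_x)$, then gluing with a partition of unity pulled back from $X$. The paper carries out the transport through a $G_x$-invariant Ehresmann connection on the bibundle, descending from the submersion groupoid to the gauge groupoid.

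The genuine gap is in Step 3, at ``write $V=\rho(\alpha)$ locally using the linearization''. You can show cleanly that $V$ is pointwise tangent to the orbits: $\pi_*\xi=0$ makes $\pi^*f$ constant along the flow of $V$, and smooth functions separate points of $X$, so the flow fixes every orbit. But once the orbit dimension jumps, pointwise tangency does not put $V$ in $\rho(\Gamma(A))$, and no choice of linearization helps.

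A counterexample: take $G=SU(2)\ltimes\mathbb{C}^2$ with the standard representation and $\xi(g,z)=(0_g,iz)$.
\begin{enumerate}
\item Its flow $(g,z)\mapsto(g,e^{i\tau}z)$ consists of groupoid automorphisms, because scalars commute with $SU(2)$. So $\xi\in\mathfrak{X}_m(G)$, with base field $v(z)=iz$.
\item Invariant functions are radial, so $\pi_*\xi=0$.
\item Suppose $\rho(a)(z)=a(z)z=iz$ with $a(z)\in\mathfrak{su}(2)$. The action is free away from the origin, so $a$ is uniquely determined there. This forces $a(\epsilon e_1)=\mathrm{diag}(i,-i)$ and $a(\epsilon e_2)=\mathrm{diag}(-i,i)$ for every $\epsilon>0$, up to an overall sign fixed by the convention for $\rho$.
\item Hence no continuous $a$ exists, and $\xi\notin\mathrm{im}(\delta)$.
\end{enumerate}

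So the middle exactness $\ker\pi_*=\mathrm{im}(\delta)$ is false for general proper groupoids, and this step cannot be repaired. The isomorphism $\mathfrak{X}([M/G])\simeq\mathfrak{X}^\infty(X)$ fails with it. What survives is the description of the image of $\pi_*$ and the inclusion $\mathrm{im}(\delta)\subseteq\ker\pi_*$.

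The paper's own proof fails at the same point. It asserts that the horizontal lift $\bar v^h$ is $t$-vertical and deduces $\rho(a)=v$. That is exactly the unjustified passage from ``tangent to the orbits'' to ``in the image of $\rho$''; in the example, no splitting of $ds$ has this property.

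Your argument does go through when orbit dimension is locally constant (regular proper groupoids, including orbifold groupoids). Then $\mathrm{im}(\rho)$ is a subbundle, and orbit-tangent fields lift smoothly through $\rho$. Your treatment of the zero-base-field case is sound and cleaner than the paper's splitting computation: you average the isotropy-valued cocycle $g\mapsto\xi(g)\cdot 0_{g^{-1}}$ against a normalized Haar system. It cannot, however, make up for the missing first half.
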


An immediate and significant consequence of this result is that geometric vector fields on the separated differentiable stack $[M/G]$ induce stratified vector fields on the orbit space $X$. In fact, there is a Lie algebra isomorphism $\mathfrak{X}([M/G])\simeq \mathfrak{X}^{\infty}(X)$. A second important consequence concerns two notable applications: the first is a stacky version of the Gauss lemma, relevant in geometric analysis. More precisely, if $d_N$ denotes the geodesic distance on $X$ induced by a groupoid Riemannian metric on $G$ then we obtain the following:

 \begin{proposition*}[B]
	
	Let $G \rightrightarrows M$ be a Riemannian groupoid. Then, for any $x \in M$ such that the orbit $\mathcal{O}_x$ is an embedded submanifold, there exists $\epsilon > 0$ verifying that:
	\begin{enumerate}
		\item the distance function $r_x \colon B^{\circ}_{d_N}([x],\epsilon) \to \mathbb{R}$, defined by $r_x([y]) = d_N([x],[y])$, is smooth on the punctured neighborhood $B^{\circ}_{d_N}([x],\epsilon)$, and
		\item its gradient $\nabla r_x:=\pi_*(\nabla r_{\mathcal{O}_x})$ is a stratified vector field on $B^{\circ}_{d_N}([x],\epsilon)$.
	\end{enumerate}
	
\end{proposition*}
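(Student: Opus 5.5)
Here $r_{\mathcal{O}_x}\colon M\to\mathbb{R}$ denotes the Riemannian distance on $M$ from the embedded orbit $\mathcal{O}_x$, taken with respect to the metric on $M$ induced by the groupoid metric on $G$. The plan is to lift everything to $M$, apply the classical generalized Gauss lemma for the embedded submanifold $\mathcal{O}_x$, and then push down to $X$ using Theorem (A).

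\textbf{Step 1: lift the distance function.} The metric on $M$ induced by the groupoid metric makes the orbits equidistant; that is, the leaves form a singular Riemannian foliation. For such metrics the orbit-space distance satisfies $d_N([x],[y]) = d(\mathcal{O}_x,\mathcal{O}_y)$. Hence $r_x\circ\pi = r_{\mathcal{O}_x}$ on a saturated tubular neighborhood of $\mathcal{O}_x$.

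\textbf{Step 2: apply the classical Gauss lemma on $M$.} I would use the linearization theorem for Riemannian groupoids (Pflaum--Posthuma--Tang, del Hoyo--Fernandes) together with the fact that $\mathcal{O}_x$ is embedded. These give an $\epsilon>0$ such that the normal exponential map $\exp\colon \nu^{\epsilon}(\mathcal{O}_x)\to U$ is a diffeomorphism onto the invariant tube $U$. On $U\setminus\mathcal{O}_x$, the generalized Gauss lemma says:
\begin{itemize}
\item $r_{\mathcal{O}_x}$ is smooth;
\item $\nabla r_{\mathcal{O}_x}$ is the radial field $\exp_*(\xi/|\xi|)$.
\end{itemize}
Since $r_{\mathcal{O}_x}$ is $G$-invariant, $r_x$ is a smooth function on the punctured ball $B^\circ_{d_N}([x],\epsilon)=\pi(U\setminus\mathcal{O}_x)$. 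This uses the definition of $C^\infty(X)$ as invariant smooth functions. Part (1) follows.

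\textbf{Step 3: show the gradient is multiplicative.} To invoke Theorem (A), I need $\nabla r_{\mathcal{O}_x}$ to be the base component of a multiplicative vector field on $G|_{U\setminus\mathcal{O}_x}$. The natural candidate is the gradient of $r_{\mathcal{O}_x}\circ s = r_{\mathcal{O}_x}\circ t$ with respect to the groupoid metric on $G$.
\begin{itemize}
\item Because the metric on $G$ is transverse to both $s$ and $t$, this gradient is $s$- and $t$-projectable onto $\nabla r_{\mathcal{O}_x}$; this uses that $s,t$ are Riemannian submersions.
\item Its flow is the geodesic flow along normal directions. Multiplicativity of the groupoid metric, i.e.\ compatibility with the metric on $G^{(2)}$, makes this flow compatible with multiplication.
\end{itemize}
An alternative is to check multiplicativity via the linear model. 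In the linearization $\nu(\mathcal{O}_x)\rtimes G|_{\mathcal{O}_x}$, the radial Euler-type field $\xi/|\xi|$ is clearly invariant under the linear action. It therefore lifts to a multiplicative field, e.g.\ by acting trivially on the $G|_{\mathcal{O}_x}$ factor.

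\textbf{Step 4: conclude.} Theorem (A) then gives $\pi_*(\nabla r_{\mathcal{O}_x})\in\mathfrak{X}^\infty(B^\circ_{d_N}([x],\epsilon))$, which is part (2). The definition is independent of the choice of lift modulo $\mathrm{im}(\delta)$.

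The main obstacle is Step 3: producing a multiplicative lift and making sure the construction is compatible with the linearization. I would do it in the linear model, where invariance under the normal representation is transparent, and transport the result back via the Riemannian linearization diffeomorphism. That diffeomorphism is built from the exponential map, so it preserves the radial field.
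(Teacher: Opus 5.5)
Your Step 3 (first route) matches the paper's argument for part (2). The paper also forms $R_{\mathcal{O}_x}=s^*r_{\mathcal{O}_x}$ on $G_U\setminus G_{\mathcal{O}_x}$, asserts that $\nabla R_{\mathcal{O}_x}$ is multiplicative (citing Ortiz--Valencia), and applies $\pi_*$. Your second bullet is only a heuristic; here is the precise reason. For a $2$-metric, the maps $\mathrm{pr}_1,m,\mathrm{pr}_2\colon G^{(2)}\to G$ are all Riemannian submersions, and the pullbacks of $R_{\mathcal{O}_x}$ along them coincide because $r_{\mathcal{O}_x}$ is basic. The gradient of this common function on $G^{(2)}$ is therefore the horizontal lift of $\nabla R_{\mathcal{O}_x}$ through all three maps. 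This yields $dm(\nabla R_{\mathcal{O}_x}(g),\nabla R_{\mathcal{O}_x}(h))=\nabla R_{\mathcal{O}_x}(gh)$.

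\textbf{The gap is in Steps 1--2.} They rest on two global properties of the distance to the whole orbit:
\begin{itemize}
\item equidistance of orbits, so that $d(\cdot,\mathcal{O}_x)$ is basic on a saturated neighbourhood;
\item a uniform $\epsilon$ with $\exp\colon\nu^{\epsilon}(\mathcal{O}_x)\to U$ a diffeomorphism onto a saturated tube.
\end{itemize}
Both hold when $\mathcal{O}_x$ is compact. But the statement only assumes $\mathcal{O}_x$ embedded, and the linearization theorem does not supply either property.

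Here is a counterexample. Let $M=\mathbb{R}^2\setminus\bigcup_{n\geq 1}\{(a,1/n): a\geq n\}$ with the flat metric. Let $G=M\times_{\mathbb{R}}M$ be the submersion groupoid of $(a,b)\mapsto b$. It is proper and source-connected, and the Euclidean metric on $G^{(2)}\cong M\times_{\mathbb{R}}M\times_{\mathbb{R}}M\subset\mathbb{R}^4$ is a $2$-metric inducing the flat metric on $M$.
\begin{itemize}
\item For $\mathcal{O}_x=\{b=0\}$, the normal geodesic from $(n,0)$ is blocked at time $1/n$. So $\exp$ is defined on no $\nu^{\epsilon}(\mathcal{O}_x)$.
\item Fix $t\in(0,1)$ with $t\notin\{1/n\}$. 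Then $d((a,t),\mathcal{O}_x)\to\infty$ as $a\to\infty$, whereas $d_N([x],[(a,t)])=t$. So $d(\cdot,\mathcal{O}_x)$ is not basic on any saturated neighbourhood, and the identity $r_{\mathcal{O}_x}\circ s=r_{\mathcal{O}_x}\circ t$ you use in Step 3 fails.
\end{itemize}
Your linear-model alternative breaks for the same reason, since it transports the radial field through an exponential linearization of a saturated neighbourhood.

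The paper avoids this by localizing at $x$. It takes the slice $S=\exp_x(B(0,\epsilon))$ with saturation $U$. It uses that $G_S\hookrightarrow G_U$ is a Riemannian Morita map, so $d_N$ may be computed on $S/G_S$. The pointwise Gauss lemma at the fixed point $x$ of the linearized $G_S\cong G_x\ltimes B(0,\epsilon)$ then gives $r_x([\exp_x v])=\|v\|$. Finally, it \emph{defines} $r_{\mathcal{O}_x}$ as the basic extension $\pi^*r_x$ to $U\setminus\mathcal{O}_x$, not as the distance to $\mathcal{O}_x$. With that definition your Step 3 goes through verbatim. When $\mathcal{O}_x$ is compact, your tube version of the Gauss lemma is a valid classical alternative, and it has the added benefit of identifying $\nabla r_{\mathcal{O}_x}$ explicitly as the radial field.
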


In particular, as every proper Lie groupoid admits a Riemannian groupoid metric \cite[Thm. 4.13]{dHF}, the previous result applies to any Lie groupoid of this type. The second consequence of Theorem (A) is a smooth version of Palais' covering isotopy theorem for a class of proper Lie groupoids. Namely, let $H \rightrightarrows N$ be another proper Lie groupoid with orbit space $Y$. Hence, we get that:
 
\begin{theorem*}[C]
	Let $\psi: (G \rightrightarrows M) \to (H \rightrightarrows N)$ be a Lie groupoid morphism and let $H$ be a Lie groupoid such that the canonical projection $\pi : N \to Y$ is proper. Let $F : \mathbb{R} \times X \to Y$ be a stratified map such that the map $F_\tau:=F|_{\{\tau\}\times X}$ is a normally transverse embedding for every $\tau\in \mathbb{R}$ and $F_0 = \overline{\psi}:X\to Y$. Then, the following assertions hold true:
	\begin{enumerate}
		\item There exists a Lie groupoid morphism $\Psi : \mathbb{R} \times G \to H$ such that $\Psi_0 = \psi$ and the induced map between orbit spaces $\overline{\Psi}$ agrees with $F$.
		\item If $\psi$ is a Morita fibration then $\Psi_\tau$ is also a Morita fibration for all $\tau\in \mathbb{R}$. 
	\end{enumerate}
Moreover, any two lifts for $F$ differ by a smooth natural transformation.
\end{theorem*}

Roughly speaking, the homotopy lifting property is a common feature of locally trivial fibrations. In fact, Ehresmann’s theorem states that proper submersions between manifolds are locally trivial fibrations, and Thom's first isotopy lemma \cite[Prop.~11.1]{Mather} ensures that stratified proper submersions are locally trivial. Consequently, Theorem (C) suggests a connection between the liftability of a stratified isotopy of an orbit space and the property of being a stacky locally trivial fibration (compare  \cite{dHF2}). 

It is worth mentioning that further applications are expected from these results, such as a stacky version of the Myers–Steenrod theorem for separated Riemannian stacks \cite{dMHCV}.

The paper is organized as follows. In Section \ref{S:2}, we define Morita-type stratifications of proper Lie groupoids, following closely the conventions and terminology introduced in \cite{CM}. In particular, we describe the fibers of what can be regarded as the tangent bundle of a stratified orbit space, showing that they have the structure of a cone (see Proposition \ref{Pro1}). Section \ref{S:3} is devoted to developing the necessary machinery to establish the natural identification provided by our first main result, which arises from describing the relationship between multiplicative vector fields on proper Lie groupoids and stratified vector fields on the corresponding orbit spaces. This is the content of Theorem \ref{thm1} and Proposition \ref{prop:exact-sequence}. The first application of this identification, namely the stacky version of the generalized Gauss lemma, is exhibited in Proposition \ref{Pro2}. In Section \ref{S:4}, we establish a smooth version of Palais’ covering isotopy theorem for a class of proper Lie groupoids, thereby proving our second main result (see Theorem \ref{thm2} and Proposition \ref{cor1}). To this end, we begin by introducing the notions of Zariski normally transverse and stacky normally transverse maps between smooth stratified orbit spaces, and conclude by describing some of their local properties.

	\vspace{.5cm}
	{\bf Acknowledgments:} We would like to thank Marcos Alexandrino, Olivier Brahic, and Ivan Struchiner for several valuable discussions related to this work. M. de Melo was supported by Grant 01/2024 (FAP–UFES). F. Valencia was supported by Grant 2024/14883-6 from the São Paulo Research Foundation (FAPESP). M. de Melo also thanks the Geometry Group at IME-USP, especially M. Alexandrino and I. Struchiner, for their support during visits to the institute.
	
\section{Morita type stratification}\label{S:2}

This section is devoted to introducing as well as describing some properties of the type of stratification for the orbit space of a proper Lie groupoid that we shall be dealing with throughout the paper. Let $X$ be a Hausdorff second-countable paracompact topological space. A \textbf{stratification} of $X$ is a locally finite partition $\mathcal{S}=\{X_j: j\in I\}$ of $X$ such that its members satisfy:
\begin{enumerate}
	\item each $X_j$, endowed with the subspace topology, is a locally closed, connected subspace of $X$, carrying a given structure of a smooth manifold, and
	\item \textbf{frontier condition}: the closure of each $X_j$ is the union of $X_j$ with members of $\mathcal{S}$ of strictly lower dimension.
\end{enumerate}

\noindent The members $X_j\in \mathcal{S}$ are called the \textbf{strata} of the stratification. In the presence of smooth structures, there
are several other conditions which are often imposed, with the
Whitney’s conditions being the most common of them. Namely, let $M$ be a smooth manifold and let $\mathcal{S}$ denote a stratification on $M$. We say that a pair of strata $(S,R)$ such that $S$ is in the closure of $R$ satisfies the \textbf{Whitney condition (A)} if the following holds:

\begin{enumerate}
	\item[(A)] Let $(x_n)$ be any sequence of points in $R$ such that $x_n$ converges to a point $x\in S$ and $T_{x_n}R$ converges to $\tau\subset T_x M$ in the Grassmannian of subspaces of $TM$ of dimension $\dim R$. Then $T_xS\subset \tau$.
\end{enumerate}

\noindent Let $\phi:U\to \mathbb{R}^n$ be a local chart of $M$ around $x\in S$. We say that $(S,R)$ satisfies the \textbf{Whitney condition (B)} at $x$ with respect to the chart $(U,\phi)$ if verifies the following:

\begin{enumerate}
	\item[(B)] Let $(x_n)$ be a sequence as in (A), with $x_n\in U \cap R$ and let $(y_n)$ denote a sequence of points of $U\cap S$, converging to $x$, such that the sequence of lines $\overline{\phi(x_n)\phi(y_n)}$
	converges in the projective $n$-space to a line $l$. Then $(d\phi_x)^{-1}(l)\subset  \tau$.
\end{enumerate}

\noindent The pair of strata $(S,R)$ is said to satisfy the Whitney condition (B) if the above condition holds for any point $x\in S$ and any chart around $x$. Although we have used charts in the definition of Whitney condition (B), the
condition is actually independent of the chart chosen. A stratification is called a \textbf{Whitney stratification} if all pairs of strata satisfy the Whitney conditions (A) and (B).

From now on, we assume that the reader is familiar with the notion of a Lie groupoid and its fundamental underlying geometric and topological aspects \cite{dH,MM}. Let $G\rightrightarrows M$ be a proper Lie groupoid with structural maps $(s,t,m,u,i)$. It is well-known that the properness assumption on $G$ implies that the groupoid orbits $\mathcal{O}_x$ are embedded in $M$, the isotropy Lie groups $G_x$ are compact, and the orbit space $M/G$ is Hausdorff, second-countable, and paracompact. Furthermore, the orbit projection $\pi:M\to M/G$ is an open map. Additionally, it turns out that proper Lie groupoids are linearizable, in the sense that around any orbit $\mathcal{O}_x$ there are full groupoid neighborhoods $(G_U\rr U)\subseteq (G\rr M)$ of $G_{\mathcal{O}_x}\rr \mathcal{O}_x$ and $(\nu(G_{\mathcal{O}_x})_V\rr V)\subseteq (\nu(G_{\mathcal{O}_x})\rr \nu(\mathcal{O}_x))$ of $G_{\mathcal{O}_x}\rr \mathcal{O}_x$, seen as the zero section, and a Lie groupoid isomorphism
$$\phi:(\nu(G_{\mathcal{O}_x})_V\rr V)\to (G_U\rr U),$$
which is the identity on $G_{\mathcal{O}_x}\rr \mathcal{O}_x$. Such a linearization $\phi$ is said to be \textbf{invariant} if both $U$ and $V$ above are saturated. It is worth emphasizing that the so-called linearization theorem generalizes several classical and fundamental results in differential geometry, such as Ehresmann’s theorem for submersions, Reeb’s local stability theorem for foliations, and the tube theorem for proper Lie group actions \cite{CS,dHdM2,dHF,Me2,We2,Zung}.

\begin{remark}
A slice at $x\in M$ is an embedded submanifold $S \subseteq M$ of dimension complementary to $\mathcal{O}_x$ such that it is transverse to all orbits it meets and $S\cap \mathcal{O}_x=\left\lbrace x\right\rbrace$. It is well known that for each $x \in M$ we have that $\nu(\mathcal{O}_x) \simeq P_x \times_{G_x} \nu_x(\mathcal{O}_x)$, where $P_x:=s^{-1}(x)$ is a principal
$G_x$-bundle over $\mathcal{O}_x$ along the target map. Therefore, the linearization theorem implies that the $G_x$-invariant subset $W_x = \nu_x(\mathcal{O}_x) \cap V$ gives rise to a slice at $x$, and, additionally, if $V \subset \nu(\mathcal{O}_x)$ is saturated, then one can identity $P_x \times_{G_x} W_x\simeq \nu(G_{\mathcal{O}_x})_V \simeq G_{\mathcal{O}_x} \ltimes V$.

\end{remark}

Let $X$ denote the orbit space of $G$. The Morita type stratifications of $M$ and $X$ introduced below generalize the canonical stratifications induced by proper Lie group actions \cite{CM}. We say that two point $x,y\in M$ are \textbf{Morita type equivalent} if and only if the normal representations $G_x\curvearrowright \nu_x(\mathcal{O}_x)$ and $G_y\curvearrowright \nu_y(\mathcal{O}_y)$ are isomorphic. The partition by Morita types is denoted by $\mathcal{P}_\mathcal{M}(M)$, and we refer to each member of it as a \textbf{Morita type}. If $\alpha$ denotes the isomorphism class associated with the normal representation $G_x\curvearrowright \nu_x(\mathcal{O}_x)$ then the element in $\mathcal{P}_\mathcal{M}(M)$ corresponding to $\alpha$ will be denoted by $M_{(\alpha)}=\lbrace x\in M: [G_x, \nu_x(\mathcal{O}_x)]=\alpha\rbrace$. We also denote by $M_{(x)}$ the Morita type of a point $x\in M$. Points in the same orbit belong to the same Morita type and hence we also obtain a partition by Morita types on the orbit space, which is denoted by $\mathcal{P}_\mathcal{M}(X)$. The orbit projection map $\pi: M\to X$ takes Morita types $M_{(\alpha)}$ in $M$ into Morita types $X_{(\alpha)}=\pi(M_{(\alpha)})$ in $X$.

Following \cite[Sec.~4.6]{CM}, the canonical Whitney stratification by Morita types on $M$, denoted by $\mathcal{S}_{G}(M)$, is the partition on $M$ obtained by passing to connected components of $\mathcal{P}_\mathcal{M}(M)$. Accordingly, the
Whitney stratification by Morita types on the orbit space $X$, denoted by $\mathcal{S}(X)$, is the partition on $X$ obtained
by passing to connected components of $\mathcal{P}_\mathcal{M}(X)$. One finds that if $G$ has only one Morita type then the orbit space $X$ is a smooth manifold and the canonical projection $\pi : M \to X$ becomes a submersion whose fibers are the orbits. More importantly, let $G_{(\alpha)}\rr M_{(\alpha)}$ denote the groupoid $G_{(\alpha)}=s^{-1}(M_{(\alpha)})$ over a fixed Morita type $M_{(\alpha)}$. It follows that $G_{(\alpha)}\rr M_{(\alpha)}$ is a Lie groupoid, $X_{(\alpha)}$ is a smooth manifold and the canonical projection $\pi : M_{(\alpha)}  \to X_{(\alpha)}$ is a submersion. If either the orbit space $X$ is compact or has a finite number of Morita types then it can be embedded into an affine space $\mathbb{R}^d$, compare \cite{CM,FaSea}. Additionally, any Morita equivalence between two proper Lie groupoids induces an isomorphism of differentiable stratified spaces between their orbit spaces \cite{CM}.

We provide now a description of the local model around a stratum $X_{(\alpha)}$ in $\mathcal{P}_\mathcal{M}(X)$. Let $\nu(G_{(\alpha)})\rr \nu(M_{(\alpha)})$ denotes the normal groupoid of $G_{(\alpha)}\rr M_{(\alpha)}$, which can be naturally regarded as a $VB$-groupoid. It turns out that the projection of $\nu(G_{(\alpha)})$ onto $G_{(\alpha)}$ gives rise to a map of the stratified spaces $\bar{\pi} : \nu(M_{(\alpha)}) / \nu(G_{(\alpha)}) \to X_{(\alpha)}$, where we think of $\nu(M_{(\alpha)}) / \nu(G_{(\alpha)})$ as a stratified vector bundle in the sense of \cite{Ross}.  One may characterize the fiber of $\bar{\pi}$ as follows:
	
\begin{proposition}\label{Pro1}
The fiber of $\bar{\pi}: \nu(M_{(\alpha)}) / \nu(G_{(\alpha)}) \to X_{(\alpha)}$ at $[x]\in X_{(\alpha)}$ admits a cone identification:
$$\overline{\pi}^{-1}([x]) \simeq \frac{[0,\infty) \times (S(W_x)/G_x)}{\{0\} \times (S(W_x)/G_x)} \simeq C(S(W_x)/G_x),$$
where $S(W_x)$ is a sphere in the slice $W_x$.
\end{proposition}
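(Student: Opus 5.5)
The plan is to compute the fiber of $\bar\pi$ over $[x]$ directly, using the linear structure of the normal $VB$-groupoid and the normal representation at $x$. Then we identify the resulting quotient of a vector space by a compact group with a cone over the quotient of a sphere.

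First we describe the fiber upstairs. The projection $\nu(M_{(\alpha)})\to M_{(\alpha)}$ is a $VB$-groupoid morphism over $G_{(\alpha)}\rr M_{(\alpha)}$. Hence $\bar\pi^{-1}([x])$ is the quotient of $\nu(M_{(\alpha)})|_{\mathcal{O}_x}$ by the restriction of $\nu(G_{(\alpha)})$ to $\mathcal{O}_x$. This restricted groupoid is transitive over the orbit up to the vector bundle directions. Arguing as in the Remark on slices, with $P_x=s^{-1}(x)$, we write
$$\nu(M_{(\alpha)})|_{\mathcal{O}_x}\simeq P_x\times_{G_x}\nu_x(M_{(\alpha)}),$$
and the action of $\nu(G_{(\alpha)})|_{\mathcal{O}_x}$ reduces to the action of $G_x$ on the fiber $\nu_x(M_{(\alpha)})$. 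We conclude that $\bar\pi^{-1}([x])\simeq \nu_x(M_{(\alpha)})/G_x$. Here $\nu_x(M_{(\alpha)})=T_xM/T_xM_{(\alpha)}$. Since $T_xM_{(\alpha)}\supseteq T_x\mathcal{O}_x$, this space is a $G_x$-equivariant quotient of $\nu_x(\mathcal{O}_x)$.

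Next we identify this space with a slice. Choose a $G_x$-invariant inner product on $\nu_x(\mathcal{O}_x)$, which exists because $G_x$ is compact. Split $\nu_x(\mathcal{O}_x)=\nu_x(\mathcal{O}_x)^{G_x}\oplus W$ orthogonally. By the linearization theorem, near $x$ the Morita type through $x$ corresponds in the slice to the fixed-point set $\nu_x(\mathcal{O}_x)^{G_x}$. Indeed, a point $v$ of the slice has the same normal representation as $x$ exactly when $G_v=G_x$ and the representations agree, which forces $v$ to be fixed. Hence $T_xM_{(\alpha)}/T_x\mathcal{O}_x=\nu_x(\mathcal{O}_x)^{G_x}$, and $\nu_x(M_{(\alpha)})\simeq W$ as $G_x$-representations. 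We interpret $W_x$ in the statement as this normal slice to the stratum, and $S(W_x)$ as its unit sphere for the invariant metric. Justifying the local identification of $M_{(\alpha)}$ with the fixed subspace is the main obstacle. It requires the linearization theorem and the fact that isomorphic normal representations force equal isotropy dimension and type; I would invoke the description of Morita types in \cite{CM} rather than reprove it.

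Finally, the cone identification is elementary. The map $[0,\infty)\times S(W_x)\to W_x$, $(t,u)\mapsto tu$, is a continuous surjection. It is a homeomorphism off $\{0\}\times S(W_x)$ and collapses that set to $0$, and it is $G_x$-equivariant for the trivial action on $[0,\infty)$. Since $G_x$ preserves the norm, the map descends to a continuous bijection
$$\frac{[0,\infty)\times (S(W_x)/G_x)}{\{0\}\times (S(W_x)/G_x)}\to W_x/G_x.$$
This bijection is a homeomorphism. Properness of the radial map, together with compactness of $G_x$ and of $S(W_x)$, makes the quotient maps closed, so we can check openness separately near the cone point and away from it. Combining the three steps gives $\bar\pi^{-1}([x])\simeq C(S(W_x)/G_x)$.
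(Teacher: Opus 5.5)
Your proof is correct and follows essentially the same route as the paper's: you reduce the fiber to $\nu_x(M_{(\alpha)})/G_x$ (the paper phrases this as the Morita equivalence between the transitive groupoid $G_{\mathcal{O}_x}$ and $G_x$, which is equivalent to your associated-bundle description $P_x\times_{G_x}\nu_x(M_{(\alpha)})$), then identify $\nu_x(M_{(\alpha)})$ with the orthogonal complement $W_x$ of $\nu_x(\mathcal{O}_x)^{G_x}$, and finally use polar coordinates. Your additional checks fill in details the paper leaves implicit, namely that the Morita type is locally the fixed-point set and that the continuous bijection from the cone is a homeomorphism by properness and compactness.
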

\begin{proof}
	
It is well-known that $\nu(G_{(\alpha)})$ can be identified with the action groupoid $G_{(\alpha)}\ltimes \nu(M_{(\alpha)})$ (see \cite[Prop.~6]{Fernandes2014}), which guarantees that $\nu(M_{(\alpha)})/\nu(G_{(\alpha)})\simeq \nu(M_{(\alpha)})/G_{(\alpha)}$. The fiber of $\overline{\pi}$ at $[x]\in X_{(\alpha)}$ is the stratified space given by the orbit space of $\nu(G_{(\alpha)})$ restricted to $G_{\mathcal{O}_x}$. We know that $G_{\mathcal{O}_x}$ is a transitive groupoid, so it is Morita equivalent to the isotropy group $G_x$. Thus, the action groupoid $G_x\ltimes \nu_x(M,M_{(\alpha)})$ is Morita equivalent to $\nu(G_{(\alpha)})_{G_{\mathcal{O}}}$ and, consequently, we get the identification $\overline{\pi}^{-1}([x])\simeq \nu_x(M,M_{(\alpha)})/G_x$, see \cite[Cor.~3.7]{dHO}.

Let us equip $G$ with a groupoid Riemannian 2-metric (consult \cite{dHF}). This allows one to identify $\nu_x(M_{(\alpha)})$ with the vector space $W_x$, in such a way $\nu_x(\mathcal{O}_x)=\nu_x(\mathcal{O}_x)^{G_x}\oplus W_x$ and $W_x^{\perp}=\nu_x(\mathcal{O}_x)^{G_x}$. Hence, denoting $W_x^{\times} := W_x \setminus \{0\}$ and $S(W_x) := \{v \in W_x \mid \|v\| = 1\}$, we obtain the following maps:
\begin{itemize}
	\item $W_x^{\times} \to (0,\infty) \times S(W_x)$, defined by sending $v \mapsto (\|v\|, v/\|v\|)$, which is a $G_x$-equivariant diffeomorphism, and
	\item $\varphi : [0,\infty) \times (S(W_x)/G_x) \to W_x/G_x$, defined by sending $\varphi(t,[v]) = [tv]$, which induces a bijection between $(0,\infty) \times (S(W_x)/G_x)$ and $W_x^{\times}/G_x$, and collapses $\{0\} \times (S(W_x)/G_x)$ to $[0]$.
\end{itemize}

\noindent Therefore, using the above maps one obtains:
\[\overline{\pi}^{-1}([x]) \simeq W_x/G_x \simeq \frac{[0,\infty) \times (S(W_x)/G_x)}{\{0\} \times (S(W_x)/G_x)} \simeq C(S(W_x)/G_x).\]
\end{proof}

We refer to $S(W_x)/G_x$ as the \textbf{link space} at $x\in M$ and denote it by $L_x$. 

\begin{remark}
Proposition \ref{Pro1} implies that the local model around the stratum $\nu(M_{(\alpha)})/G_{(\alpha)}$ is a stratified space fibered by cones over the stratum $X_{(\alpha)}$. We know that for each $[x]\in X$ there is a neighborhood $U$ such that $U\simeq V_x/G_x$, where $V_x$ is a $G_x$-invariant neighborhood of the origin in $
\nu_x(\mathcal{O}_x)$. Decomposing $\nu_x(\mathcal{O})= 
\nu_x(\mathcal{O})^{G_x}\oplus W_x$ orthogonally as above, we get that $
\nu_x(\mathcal{O})/G_x\simeq 
\nu_x(\mathcal{O})^{G_x}\oplus W_x/G_x$. Additionally, the tangent space at the stratum $M_{(\alpha)}$ is given by $T_x(M_{(\alpha)})=T_x\mathcal{O}_x\oplus 
\nu_x(\mathcal{O})^{G_x}$, where the trivial factor of the normal representation (i.e. the subspace $
\nu_x(\mathcal{O})^{G_x}$), can be seen as the usual tangent space of $X_{(\alpha)}$ at $[x]$. This shows that the neighborhood $U$ can be identified with  $\tilde{V}	\times \tilde{C}$, where $	\tilde{V}=V_x\cap 
\nu_x(\mathcal{O})^{G_x}$ and $	\tilde{C}$ is an open neighborhood of the vertex of $C(L_x)$. See \cite[Cor.~5.8]{CM} and \cite{NV}.

We also point out that the linearization around saturated neighborhoods (compare  \cite{dHF}), implies that there exists a linearization 
$\varphi: U \subseteq \nu(G_{(\alpha)}) \to U' \subseteq G$
with $\varphi(Z) = G_{(\alpha)}$, where $Z$ is the zero section of $\nu(G_{(\alpha)})$ and $U, U'$ are some Lie groupoid neighborhoods. Hence, the induced orbit map
$\overline{\varphi} : \overline{U} \subset \nu(M_{(\alpha)})/G_{(\alpha)} \to \overline{U'} \subseteq X$ is a stratified diffeomorphism with $\overline{\varphi}(\overline{Z}) = X_{(\alpha)}$.
\end{remark}

\section{Stratified vector fields}\label{S:3}

Our aim in this section is to establish a natural relation between multiplicative vector fields on proper Lie groupoids and stratified vector fields on the corresponding orbit spaces. 

It is known that the Lie algebra of vector fields on the separated differentiable stack $[M/G]$ presented by $G\rr M$ can be modeled by means of the Lie 2-algebra\footnote{A \textbf{Lie 2-algebra} is a Lie algebra internal to the category of Lie groupoids. Equivalently, these kinds of algebraic objects can be defined by using crossed models of Lie algebras.} of multiplicative vector fields on $G$, see \cite{OW}. The key steps of this construction are as follows. We consider the Lie algebroid of $G$, which is defined to be the vector bundle $A:=\textnormal{ker}(ds)|_{M}\subset TG$ with anchor map $\rho:A\to TM$ obtained by restricting $dt:TG\to TM$ to $A$, and the Lie bracket on $\Gamma(A)$ is the one induced by the bracket of vector fields on $G$, consult \cite[Sec.~ 1.4]{crainicfernandes}. A \textbf{multiplicative vector field} on a Lie group $G\rr M$ is a pair of vector fields $(\xi,v)\in \mathfrak{X}(G)\times \mathfrak{X}(M)$ such that $\xi:G\to TG$ determines a Lie groupoid morphism covering $v:M\to TM$, see \cite{MX}. Clearly, the set of multiplicative vector field on $G$, denoted by $\mathfrak{X}_m(G)$, inhibits a Lie algebra structure in a canonical way.  A very important, and actually equivalent, property of multiplicative vector fields is that their flow determine local 1-parameter groups of Lie groupoid automorphism of $G$. For each section $a\in \Gamma(A)$ one can define right invariant $a^r$ and left invariant $a^l$ vector fields on $G$ by using the composition map of the tangent groupoid $TG\rr TM$. Indeed:
$$a^r(g)=a(t(g))\cdot 0_g\quad \textnormal{and}\quad  a^l(g)=-0_g\cdot i(a(s(g))),\qquad g\in G.$$

\noindent Following \cite{OW}, we get that there exists a crossed module of Lie algebras $(\mathfrak{X}_m(G),\Gamma(A),\delta,D)$ where the Lie algebra morphisms $\delta:\Gamma(A)\to \mathfrak{X}_m(G)$ and $D:\mathfrak{X}_m(G)\to \textnormal{Der}(\Gamma(A))$ are respectively defined by the expressions
$$\delta(a)=(a^r-a^l,\rho(a)) \quad \textnormal{and}\quad D_{(\xi,v)}(a)=[\xi,a^r]|_M,$$
for all $a\in \Gamma(A)$ and $(\xi,v)\in \mathfrak{X}_m(G)$. We refer to the corresponding Lie 2-algebra as the Lie 2-algebra of multiplicative vector fields on $G\rr M$. As a crucial feature, we have that if $G \rightrightarrows M$ and $G' \rightrightarrows M'$ are Morita equivalent Lie groupoids then the crossed modules of Lie algebras $(\mathfrak{X}_m(G),\Gamma(A),\delta,D)$ and $(\mathfrak{X}_m(G'),\Gamma(A'),\delta',D')$ are isomorphic in the derived category of crossed modules. In consequence, the following quotient spaces are isomorphic as Lie algebras:
$$\mathfrak{X}_{m}(G)/\textnormal{im}(\delta)\simeq \mathfrak{X}_{m}(G')/\textnormal{im}(\delta').$$

\noindent This enables us to define the Lie algebra of vector fields on $[M/G]$ as $\mathfrak{X}([M/G]):=\mathfrak{X}_{m}(G)/\textnormal{im}(\delta)$. Using the terminology introduced in \cite{OW}, the elements of $\mathfrak{X}([M/G])$ are called \textbf{geometric vector fields}.

Let us now define stratified vector fields on the orbit space $X$ of our proper Lie groupoid $G$. We say that a function $f:U\to \mathbb{R}$ on an open set $U \subseteq X$ is \textbf{smooth} if $\pi^\ast f \in C^{\infty}(\pi^{-1}(U))$. The algebra of smooth functions on $X$ is defined as $C^{\infty}(X)=\left\lbrace f:X \to \mathbb{R}: \pi^\ast f \in C^{\infty}(M) \right\rbrace$. Accordingly, the \textbf{sheaf of smooth functions} on $X$, denoted by $C^\infty_X$, is defined by letting $C^\infty_X(U):=C^\infty(\pi^{-1}(U)/G_{\pi^{-1}(U)})$. Observe that the pullback map $\pi^\ast: C^{\infty}(X) \to  C^{\infty}(M)$ enables us to identify the algebra of smooth functions on $X$ with the algebra of \textbf{smooth basic functions} on $M$ given by
$$C^{\infty}(M)^G=\left\lbrace f:M \to \mathbb{R}: s^\ast f=t^\ast f  \right\rbrace.$$

\noindent We denote the space of derivations of $C^{\infty}(X)$ as $\mathfrak{Der}(C^{\infty}(X))$ and refer to its elements as \textbf{smooth vector fields} on $X$. We say that a smooth vector field on $X$ is \textbf{ strata preserving} if it preserves the vanishing ideals of $C^{\infty}(X)$ induced by the strata. The set of vector fields that preserve strata will be denoted by $\mathfrak{X}^{\infty}(X)$. Geometrically, these preserving strata vector fields are tangent to each stratum. 

\begin{remark}
Since the flow of a multiplicative vector field is given by local Lie groupoid automorphisms we get that such flows preserve the connected components of the Morita types, meaning that, at the level of object, they preserve the strata of $M$. Moreover, $\mathfrak{X}_{m}(G)$ naturally acts on $C^{\infty}(M)^G\simeq C^{\infty}(X)$. 
\end{remark}

For every pair $(\xi, v) \in \mathfrak{X}_{m}(G)$ and $f \in C^{\infty}(X)$ we denote by $\pi_*(\xi, v)(f)$ the unique smooth function on $X$ such that $\pi^*(\pi_*(\xi,v)(f))=v(\pi^*(f))$. Our first main result is the following. 

\begin{theorem}\label{thm1}
	There map $\pi_*:\mathfrak{X}_{m}(G) \to \mathfrak{Der}(C^{\infty}(X))$ is a Lie algebra homomorphism  such that its image is the set of strata preserving vector fields $\mathfrak{X}^{\infty}(X)$.
\end{theorem}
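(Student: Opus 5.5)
We split the argument into three parts: well-definedness, the homomorphism property and strata preservation, and surjectivity onto $\mathfrak{X}^{\infty}(X)$.

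\textbf{Well-definedness.} For $(\xi,v)\in\mathfrak{X}_m(G)$ and $f\in C^\infty(X)$ we must show that $v(\pi^*f)$ is again basic. Since $\xi$ is $s$- and $t$-related to $v$ (this is part of $\xi:G\to TG$ being a groupoid morphism covering $v$), we get
\[
s^*(v(\pi^*f))=\xi(s^*\pi^*f)=\xi(t^*\pi^*f)=t^*(v(\pi^*f)),
\]
using $s^*\pi^*f=t^*\pi^*f$. Hence $\pi_*(\xi,v)(f)$ exists, and it is unique because $\pi$ is surjective.

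\textbf{Homomorphism property.} The Leibniz rule and linearity are inherited from $v$ acting on $C^\infty(M)^G\simeq C^\infty(X)$. The bracket in $\mathfrak{X}_m(G)$ is $([\xi,\xi'],[v,v'])$, and applying it to basic functions gives $\pi_*[(\xi,v),(\xi',v')]=[\pi_*(\xi,v),\pi_*(\xi',v')]$.

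\textbf{Strata preservation.} By the Remark preceding the theorem, the flow of $(\xi,v)$ consists of local groupoid automorphisms. Such automorphisms preserve normal representations up to isomorphism, so the flow of $v$ preserves the strata of $\mathcal{S}_G(M)$. Hence $v$ is tangent to each $M_{(\alpha)}$. If $f$ vanishes on a stratum $X_{(\alpha)}$, then $\pi^*f$ vanishes on the saturated set $M_{(\alpha)}$, so $v(\pi^*f)$ vanishes there too. Therefore $\pi_*(\xi,v)\in\mathfrak{X}^\infty(X)$.

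\textbf{Surjectivity.} This is the main obstacle. We would argue locally and then glue.

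\emph{Local step.} By the linearization theorem and the Remark after Proposition \ref{Pro1}, each $[x]$ has a neighborhood $U\simeq \tilde V\times \tilde C\subseteq \nu_x(\mathcal{O}_x)^{G_x}\times W_x/G_x$, with $G_U$ Morita equivalent to the action groupoid $G_x\ltimes V_x$. Because the Lie 2-algebra of multiplicative vector fields is Morita invariant up to $\mathrm{im}(\delta)$ (the crossed module results of \cite{OW}), it suffices to lift a strata-preserving derivation $Z$ of $C^\infty(V_x)^{G_x}\simeq C^\infty(V_x/G_x)$ to a $G_x$-invariant vector field on $V_x$. An invariant vector field $v$ on the base of an action groupoid gives the multiplicative pair $\xi(g,y)=(0_g,v(y))$.

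To lift, choose Hilbert generators $p_1,\dots,p_k$ of the invariant polynomials on $\nu_x(\mathcal{O}_x)$, which embed $V_x/G_x$ in $\mathbb{R}^k$ by Schwarz's theorem. Then $Z$ is determined by the smooth functions $Z(p_i)$. We would invoke Schwarz's lifting theorem for vector fields (\cite{GSchwarz}; \cite{Kankaanrinta} in the proper case): a smooth strata-preserving vector field on $V/G_x$ lifts to a smooth $G_x$-invariant vector field $v$ with $v(p_i)=\pi^*Z(p_i)$. Strata preservation is exactly the hypothesis that makes this lifting possible. The key point to check is that the isotropy-type strata of the linear action $G_x\curvearrowright\nu_x(\mathcal{O}_x)$ agree locally with the Morita type strata. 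This holds because, near $x$, the normal representations of nearby points are determined by their isotropy subgroups and slice representations, as in \cite{CM}.

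\emph{Gluing.} Cover $X$ by such charts $U_i$ and lift $Z|_{U_i}$ to $(\xi_i,v_i)$ on $G_{U_i}$. Take a partition of unity $\{\rho_i\}$ by basic functions, which exists because $X$ is paracompact and has smooth partitions of unity. Set $(\xi,v)=\sum_i \pi^*\rho_i\,(\xi_i,v_i)$. This sum is multiplicative, since multiplying a multiplicative vector field by a basic function $\pi^*\rho$ (pulled back to $G$ via $s^*=t^*$) preserves multiplicativity. It also satisfies $\pi_*(\xi,v)=\sum_i\rho_i Z=Z$, proving surjectivity.

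The delicate points are the Morita transfer of the local lifts, which we would pass through $\mathrm{im}(\delta)$ and which is harmless for $\pi_*$ because $\rho(a)$ is tangent to orbits, and the verification that Schwarz's lifting applies with Morita type strata.
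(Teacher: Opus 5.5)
Your proof has the same architecture as the paper's. You check well-definedness and the bracket, get strata preservation from the flows being local groupoid automorphisms, build a local lift with Schwarz's equivariant lifting theorem on the linear model $G_x\ltimes\nu_x(\mathcal{O}_x)$, and glue with $\sum_i(\pi\circ t)^*\rho_i\,(\xi_i,v_i)$. Your explicit computation $s^*(v(\pi^*f))=t^*(v(\pi^*f))$ and your check that multiplying by a basic function preserves multiplicativity fill in what the paper calls "easy to check". Your remark that orbit-type and Morita-type strata agree locally in the linear model (both cut a slice in $W^{G_x}$) is also correct, and the paper uses it implicitly.

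The real difference is how you transfer the local lift across the Morita equivalence. You invoke the Ortiz--Waldron invariance of $\mathfrak{X}_m(\cdot)/\mathrm{im}(\delta)$ as an abstract Lie algebra isomorphism. What you actually need is that this isomorphism intertwines the two maps $\pi_*$ under the identification $C^\infty(V_x)^{G_x}\cong C^\infty(\pi^{-1}(U))^G$. Your justification, that $\rho(a)$ is tangent to orbits, only shows that $\pi_*$ descends to the quotient by $\mathrm{im}(\delta)$; it does not give this compatibility. The compatibility does hold, because the isomorphism is realized by multiplicative vector fields related along Morita maps or the bibundle, and relatedness of the base fields gives compatibility with $\pi_*$. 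But that has to be argued, not assumed.

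The paper avoids the issue by doing the transfer by hand. Take the bibundle $P_x$ with $\epsilon:P_x\to\pi^{-1}(U)$ and $\eta:P_x\to\nu_x(\mathcal{O}_x)$, and write $G_{\pi^{-1}(U)}\cong\mathrm{Gauge}(P_x)$ as a quotient of the submersion groupoid $P_x\times_\eta P_x$. Lift the invariant field $Z^*$ horizontally for a $G_x$-invariant Ehresmann connection on $\eta$, which exists because $G_x$ is compact. Then $(\tilde Z,\tilde Z)$ is a $G_x$-invariant multiplicative field on $P_x\times_\eta P_x$ and descends to the gauge groupoid. Since $\tilde Z$ is $\eta$-related to $Z^*$, the equality $\pi_*(\xi,v)=Z|_U$ is immediate.

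So your route is shorter once the naturality of the Ortiz--Waldron isomorphism is granted. The paper's route is self-contained and makes the compatibility with $\pi_*$ visible.
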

\begin{proof}
	It is easy to check that $\pi_*(\xi, v)$ belongs to $\mathfrak{Der}(C^{\infty}(X))$ for all $(\xi, v) \in \mathfrak{X}_{m}(G)$ and that it preserves the bracket. Let us show that it preserves the strata of $X$. From \cite[Prop.~23]{CM} we know that for each orbit $\mathcal{O}_x$ (i.e. point in $X$) there is a neighborhood $U$ in $X$ such that $U\simeq W/G_x$ and $C^{\infty}(U)\simeq C^{\infty}(W)^{G_x}$, where $W$ is an open neighborhood in $\nu_x(\mathcal{O}_x)$, centered at the origin, which is invariant by the normal representation. Furthermore, $X_{(\mathcal{O}_x)}\cap U\simeq W^{G_x}$ and $M_{(x)}\cap \pi^{-1}(U)\simeq \mathcal{O}_x\times W^{G_x}$. It follows that if $f \in C^{\infty}(U)$ vanishes along $X_{(\mathcal{O})}\cap U$ then $\pi^*(f)$ vanishes along $M_{(x)}\cap \pi^{-1}(U)$. Hence, since $v$ preserves the strata we get that $v(\pi^*(f))$ vanishes along $M_{(x)}\cap \pi^{-1}(U)$, and, consequently, $\pi_*(\xi,v)(f)$ vanishes along $X_{(\mathcal{O}_x)}\cap U$.

	To prove that $\pi_*$ is surjective onto the set of stratified vector fields, let us pick $Z\in\mathfrak{X}^{\infty}(X)$. Since $G$ is proper, we can consider an open cover
	$\mathcal{U}=\left\lbrace U_{\lambda}\right\rbrace_{\lambda\in \Lambda}$
	of $X$ such that, for every $\lambda\in \Lambda$, the groupoid $G_{\pi^{-1}(U_{\lambda})}$ is Morita equivalent to the action groupoid $G_x\ltimes \nu_x(\mathcal{O}_x)$ associated with the normal representation at a point $x$ in $\pi^{-1}(U_\lambda)$, see \cite[Prop.~4.7]{CS}. Let us now consider a partition of unity
    $\left\lbrace \phi_{\lambda}\right\rbrace_{\lambda \in \Lambda}$
    		subordinated to $\mathcal{U}$, consult \cite[Prop.~8]{CM}. If we verify that for each $\lambda \in \Lambda$ there exists $(\xi^{\lambda}, v^{\lambda}) \in \mathfrak{X}_m\bigl(G_{\pi^{-1}(U_{\lambda})}\bigr)$ such that $\pi_*(\xi^{\lambda}, v^{\lambda})=Z_{U_{\lambda}}$, then we can build the multiplicative vector field $Z^*=\sum_{\lambda}(\pi \circ t)^*(\phi_{\lambda})(\xi^{\lambda},v^{\lambda})$
    		on $G$, so that $\pi_*(Z^*)=Z$.
	Choose $U_\lambda \in \mathcal{U}$ and let $P_x$ be a principal bi-bundle realizing the Morita equivalence between $G_{\pi^{-1}(U_{\lambda})}$ and $G_x\ltimes \nu_x(\mathcal{O}_x)$. We denote by $\epsilon:P_x\to \pi^{-1}(U_{\lambda})$ and $\eta:P_x\to \nu_x(\mathcal{O}_x)$ the submersions associated with $P_x$. Thus, it follows that $G_{\pi^{-1}(U_{\lambda})}$ is isomorphic to $\mathrm{Gauge}(P_x)$, where the gauge groupoid is considered as the quotient of the submersion groupoid $P_x\times_{\eta}P_x \rightrightarrows P_x$ under the free and proper action of $G_x\ltimes \nu_x(\mathcal{O}_x)$ along the submersion $\eta$. Let us fix a neighborhood $W \subseteq \nu_x(\mathcal{O}_x)$ as above. Using the equivariant smooth lifting theorem (see \cite[Sec.~3]{GSchwarz}), we obtain a vector field $Z_{\lambda}^* \in \mathfrak{X}(W)^{G_x}$ that projects onto $Z_{U_{\lambda}}$. Since $G_x$ is compact, it is possible to choose a $G_x$-invariant Ehresmann connection for the submersion $\eta$. Therefore, we can lift the invariant vector field $Z_{\lambda}^*$ to a $G_x$-invariant, $\eta$-projectable vector field $\tilde{Z}_{\lambda}$ on $P_x$. This construction yields a $G_x$-invariant multiplicative vector field on the submersion groupoid $P_x\times_{\eta}P_x \rightrightarrows P_x$. Hence, after projecting it onto $\mathrm{Gauge}(P_x)$, we get the desired multiplicative vector field $(\xi^{\lambda},v^{\lambda})$ on $G_{\pi^{-1}(U_{\lambda})}$.
\end{proof}

An important consequence of Theorem \ref{thm1} is that geometric vector fields on the separated differentiable stack $[M/G]$ are the same as stratified vector field on the orbits space $X$. More precisely, we can state and prove the following result.

\begin{proposition}\label{prop:exact-sequence}
	Let $G\rr M$ be a proper Lie groupoid and let $(\mathfrak{X}_m(G),\Gamma(A),\delta,D)$ denote its crossed module of multiplicative vector fields. Then, the sequence 
	\begin{center}
		\begin{tikzcd}
			0\arrow[r]&\mathrm{im}(\delta)\arrow[r, hook]&\mathfrak{X}_{m}(G)\arrow[r,"\pi_*"]&\mathfrak{X}^{\infty}(X)\arrow[r]&0,
		\end{tikzcd}
		
	\end{center}
	is exact. Consequently, geometric vector fields on the separated differentiable stack $[G/M]$ presented by $G\rr M$ give rise to stratified vector fields on the orbit space $X$ and there is a Lie algebra isomorphism $\mathfrak{X}([M/G])\simeq \mathfrak{X}^{\infty}(X)$.  
\end{proposition}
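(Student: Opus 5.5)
By Theorem \ref{thm1}, $\pi_*$ is a Lie algebra homomorphism onto $\mathfrak{X}^{\infty}(X)$, so surjectivity is already settled. Exactness therefore reduces to the equality $\ker(\pi_*)=\mathrm{im}(\delta)$. Once that holds, the first isomorphism theorem yields $\mathfrak{X}([M/G])=\mathfrak{X}_m(G)/\mathrm{im}(\delta)\simeq \mathfrak{X}^{\infty}(X)$ as Lie algebras. I would prove the two inclusions separately.

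\emph{Inclusion $\mathrm{im}(\delta)\subseteq\ker(\pi_*)$.} Take $a\in\Gamma(A)$, so that $\delta(a)=(a^r-a^l,\rho(a))$. For $f\in C^\infty(X)$, the function $\pi^*f$ is basic, i.e.\ $s^*\pi^*f=t^*\pi^*f$. The vector $\rho(a)_x=dt(a_x)$ is the image under $dt$ of a vector tangent to the source fibre $s^{-1}(x)$, hence it is tangent to the orbit $\mathcal{O}_x$. A basic function is constant along orbits, so $\rho(a)(\pi^*f)=0$. This means $\pi_*\delta(a)=0$.

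\emph{Inclusion $\ker(\pi_*)\subseteq\mathrm{im}(\delta)$.} This is the main step. Let $(\xi,v)$ satisfy $v(\pi^*f)=0$ for all $f\in C^\infty(X)$. I would proceed in three steps.

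First, I would show that $v$ is tangent to the orbits at every point. Near $\mathcal{O}_x$, use the linear model from the Remark, $\pi^{-1}(U)\simeq P_x\times_{G_x}W$, with $C^\infty(U)\simeq C^\infty(W)^{G_x}$. Since $(\xi,v)$ is multiplicative, its flow consists of local groupoid automorphisms. These send orbits to orbits and preserve the Morita strata, so $v$ descends to a vector field on the local quotient. The vanishing condition says that this induced vector field kills all invariant functions. To conclude that $v$ projects to zero normally, I would use Hilbert's generators $p_1,\dots,p_k$ of $\mathbb{R}[\nu_x]^{G_x}$: the map $(p_1,\dots,p_k)$ separates orbits, and $v(p_i)=0$ forces the flow of $v$ to preserve each fibre of this map, i.e.\ each orbit. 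Hence $v_y\in T_y\mathcal{O}_y$ for every $y$.

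Second, I would choose a lift of $v$ to $A$. Since $\rho$ maps $A_y$ onto $T_y\mathcal{O}_y$, and orbits have varying dimension, the lift is not immediate; it has to be produced equivariantly. Locally in the linear model $G_x\ltimes\nu_x$ (through the Morita equivalence $P_x$ used in the proof of Theorem \ref{thm1}), I would pass to the equivalent problem for the compact-group action. There, a multiplicative vector field with $v$ tangent to orbits can be written as $v=\rho(a)$ for some $a$, by averaging over $G_x$ and using the structure of the action groupoid. The local sections would then be glued with the partition of unity $\{\phi_\lambda\}$ pulled back by $\pi$. Because the $\phi_\lambda$ are basic, the multiplicativity and $\delta$-relations are preserved under the gluing.

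Third, I would replace $(\xi,v)$ by $(\xi,v)-\delta(a)$, which is again multiplicative and has zero base component. A multiplicative vector field covering the zero vector field restricts to a vector field on each $s$-fibre that is tangent to the isotropy, and it defines a $1$-cocycle. Using properness and the vanishing of differentiable cohomology $H^1$ for proper groupoids (via averaging with a Haar system), this cocycle is exact, i.e.\ equal to $\delta(b)$ for some $b$ with $\rho(b)=0$. Hence $(\xi,v)\in\mathrm{im}(\delta)$.

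I expect the main obstacle to be the second and third steps. Producing smooth sections of $A$ is delicate because $\rho$ has non-constant rank. I would handle this by translating both steps into the compact-group model via the invariance of the crossed module under Morita equivalence, where averaging over $G_x$ is available.
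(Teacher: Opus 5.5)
Your second step is where the argument fails, and it cannot be repaired. The claim that a multiplicative $(\xi,v)$ with $v$ tangent to the orbits has $v=\rho(a)$ for some smooth $a\in\Gamma(A)$ is false, already in a compact linear model. Take $G=SU(2)\ltimes\mathbb{C}^2\rightrightarrows\mathbb{C}^2$ for the standard representation; it is proper and is its own linear model at $0$. Because scalars commute with $SU(2)$, the maps $\Phi_\theta(k,z)=(k,e^{i\theta}z)$ form a one-parameter group of groupoid automorphisms. Hence their generator $\xi(k,z)=(0_k,iz)$ is multiplicative and covers $v(z)=iz$. The orbits are $\{0\}$ and the spheres $|z|=r$, which $e^{i\theta}$ preserves, so $v$ kills every basic function and $\pi_*(\xi,v)=0$. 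On the other hand, $\rho(a)(z)=a(z)z$ for $a\colon\mathbb{C}^2\to\mathfrak{su}(2)$. Since $SU(2)$ acts freely on $\mathbb{C}^2\setminus\{0\}$, the equation $a(z)z=iz$ forces $a(t,0)=\mathrm{diag}(i,-i)$ and $a(0,t)=\mathrm{diag}(-i,i)$ for all $t>0$. So no continuous $a$ exists at $z=0$, and $(\xi,v)\in\ker(\pi_*)\setminus\mathrm{im}(\delta)$. Averaging over $G_x$ cannot help: it preserves solutions of $\rho(a)=v$ but cannot create one. Consequently exactness at $\mathfrak{X}_m(G)$, and with it the isomorphism $\mathfrak{X}([M/G])\simeq\mathfrak{X}^{\infty}(X)$, fails for general proper groupoids.

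The paper's proof breaks at the same point. It asserts that tangency of $v$ to the orbits gives $\bar v^h\in\Gamma(\ker dt)$, and hence $\rho(a)=v$. In the example, with the canonical splitting $u\mapsto(0_k,u)$ of $ds$, one gets $\bar v^h=\xi$ and $a=0$, but $dt(\bar v^h)=v\circ t\neq 0$. The parts of your outline that are sound are worth recording:
\begin{itemize}
\item the inclusion $\mathrm{im}(\delta)\subseteq\ker(\pi_*)$;
\item surjectivity via Theorem \ref{thm1};
\item your first step (tangency of $v$ to the orbits);
\item your third step. The cocycle $g\mapsto\xi(g)\cdot 0_{g^{-1}}$ can be averaged directly with a Haar system and cutoff function, even though $\ker\rho$ is not a vector bundle.
\end{itemize}
Together these show that a multiplicative $(\xi,v)$ lies in $\mathrm{im}(\delta)$ if and only if $v\in\rho(\Gamma(A))$. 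The sequence is therefore exact precisely when every $(\xi,v)\in\ker(\pi_*)$ has $v\in\rho(\Gamma(A))$. This holds, for instance, when $\rho$ has constant rank (regular proper groupoids), but not in general.
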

\begin{proof}
	First of all, it is clear that $(\xi,v) \in \ker(\pi_*)$ if and only if for any $G$-invariant (i.e. basic) function $f:M\to \mathbb{R}$ it holds $\mathcal{L}_{v}(f)=0$, so that $\mathrm{im}(\delta)\subseteq \ker(\pi_*)$. Let us check that for any $(\xi,v) \in \ker(\pi_*)$ there exists $a\in \Gamma(A)$ such that $\xi=a^r-a^l$ and $v=\rho(a)$. One can consider the exact sequence of vector bundles
	\[ \begin{tikzcd}
		0\arrow[r]&t^*A\arrow[r,"r"]&TG \arrow[r,"ds"]& s^*TM\arrow[r]&0,
	\end{tikzcd}\]
	where the map $r$ relates to any section of $A$ its associated right invariant vector field on $G$. Let $\bar{v}\in \Gamma(s^*TM)$ be such that $\bar{v}=s\circ v$ and let $\bar{v}^h \in \mathfrak{X}(G)$ denote the lifting of $\bar{v}$ by a fixed splitting of the above sequence. For $\xi-\bar{v}^h \in \mathfrak{X}(G)$ one has that $ds(\xi-\bar{v}^h)=v\circ s-\bar{v}=0$. Therefore, the exactness of the sequence guarantees the existence of $a \in \Gamma(A)$ such that $a^r=\xi-\bar{v}^h$. By our initial assumption, it follows that $v$ is tangent to the groupoid orbits and, consequently, $\bar{v}^h \in \Gamma(\ker(dt))$. Then: \[0=dt(\bar{v}^h)=dt(\xi-a^r)=v\circ t-\rho(a)\circ t,\]
	thus obtaining that $\rho(a)=v$. Recall that $t=s\circ i$, so that $ds(di(\bar{v}^h))=0$. This implies that there is $b\in \Gamma(A)$ such that $b^r=di(\bar{v}^h)$ and hence $\xi=a^r+di(b^r)$. Since $\xi$ is a multiplicative vector field and both $a^r$, $b^r$ are right invariant vector fields one deduces that the pair of vector fields $(\xi, \xi)$, $(a^r,0)$, and $(b^r,0)$ on $G^{(2)}$ are $m$-related to $\xi$,  $a^r$, and $b^r$ on $G$, respectively. If we rewrite $$(\xi,\xi)=(a^r,0)+(di(b^r),a^r)+(0,di(b^r)),$$ 
	then we easily see that $(di(b^r),a^r)$ is $m$-related to $0$. But 
	$a^r=dm(b^r, dm(di(b^r),a^r))=dm(b^r,0)=b^r$ and therefore $\xi=a^r+di(b^r)=a^r-a^l$. 
\end{proof}


We now turn to some applications of Theorem \ref{thm1}. In this direction, we first establish a stacky version of the generalized Gauss lemma, a result known to have several applications in geometric analysis \cite[Chap.~2]{Gray}. To this end, we begin by introducing some terminology. A Riemannian metric on $X$ can be thought of as an equivalence class of a groupoid Riemannian metric on $G\rr M$ in the sense of del Hoyo and Fernandes \cite{dHF,dHF2}. The manifolds of arrows $G$ and objects $M$ respectively inherit Riemannian metrics $\eta^{(1)}$ and $\eta^{(0)}$ such that $s$ and $t$ become Riemannian submersions and the inversion $i$ gives rise to an isometry. Additionally, the induced bundle metrics along the normal directions to the orbits are such that the normal isotropy representations are by linear isometries. This provides us with a way of inducing ``inner products'' over the ``coarse'' tangent spaces $T_{[x]} X \approx \nu_x(\mathcal{O}_x)/G_x$ for all $x\in M$. In particular, this allows us to measure the length of stacky curves and distances on $X$.

Following \cite[Thm.~6.1]{Pflaum}, we have that the geodesic distance $d_N$ on $X$ can be obtained as
$$d_N([x],[y])=\inf\lbrace d(x_1,\mathcal{O}_x)+\cdots+d(x_k,\mathcal{O}_{x_{k-1}})\rbrace,$$
where $d$ stands for the geodesic distance associated with $\eta^{(0)}$ on $M$, and the infimum is taken over all choices of pairs $(x_j,\mathcal{O}_{x_j})$ for $j=1,\dots,k-1$ and $x_k\in \mathcal{O}_y$, over all $k\in \mathbb{N}$.
Additionally, the distance $d_N$ has the following properties:
\begin{itemize}
	\item it is uniquely determined by the property that for each orbit $\mathcal{O}$ in $M$ and every point $z\in T_{\mathcal{O}}$ of an appropriate metric tubular neighborhood $T_{\mathcal{O}}$ of $\mathcal{O}$, the relation $d_N(\mathcal{O},\mathcal{O}_z)=d(z,\mathcal{O})$ holds true, where $\mathcal{O}_z$ is the orbit through $z$,
	\item the canonical projection $\pi:M\to X$ is a submetry, and
	\item the topology induced on $X$ by $d_N$ coincides with the quotient topology with respect to $\pi$.
\end{itemize}

\noindent It is worth mentioning that $d_N$ admits a more geometric interpretation, which is given by expression:
\[
d_N([x],[y]) = \inf \left\{ \ell(\alpha) : \alpha \colon [0,1] \to [M/G], \ \alpha(0) = [x], \ \alpha(1) = [y] \right\},
\]
where $\ell(\alpha)$ denotes the length of a stacky curve $\alpha$ in $[M/G]$ with respect to the stacky metric $[\eta]$ and the infimum is taken over all such curves joining $[x]$ to $[y]$, see \cite[Thm.~3]{dHdM1}. In particular, the latter characterization of $d_N$ implies that it is invariant under Morita Riemannian equivalences.

In these terms, we have the following stacky version of the Gauss lemma.

 \begin{proposition}\label{Pro2}

Let $G \rightrightarrows M$ be a Riemannian groupoid. Then, for any $x \in M$ such that the orbit $\mathcal{O}_x$ is an embedded submanifold, there exists $\epsilon > 0$ verifying that:
\begin{enumerate}
	\item the distance function $r_x \colon B^{\circ}_{d_N}([x],\epsilon) \to \mathbb{R}$, defined by $r_x([y]) = d_N([x],[y])$, is smooth on the punctured neighborhood $B^{\circ}_{d_N}([x],\epsilon)$, and
	\item its gradient $\nabla r_x:=\pi_*(\nabla r_{\mathcal{O}_x})$ is a stratified vector field on $B^{\circ}_{d_N}([x],\epsilon)$.
\end{enumerate}

\end{proposition}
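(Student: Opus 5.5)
Our approach is to lift the problem to $M$ and work with the distance to the orbit $\mathcal{O}_x$ instead of the distance on $X$. Since $\mathcal{O}_x$ is embedded, we take a metric tubular neighborhood $T_{\mathcal{O}_x}$ of radius $\epsilon$ on which the normal exponential map $\exp^\perp:\nu(\mathcal{O}_x)_{<\epsilon}\to T_{\mathcal{O}_x}$ is a diffeomorphism. We then define $r_{\mathcal{O}_x}(z)=d(z,\mathcal{O}_x)$ on $T_{\mathcal{O}_x}$. The first property of $d_N$ recalled above gives $d_N([x],[z])=d(z,\mathcal{O}_x)$ for $z\in T_{\mathcal{O}_x}$, so
\[\pi^*r_x=r_{\mathcal{O}_x}\quad\text{on }\pi^{-1}(B_{d_N}([x],\epsilon))=T_{\mathcal{O}_x}.\]
Saturation of $T_{\mathcal{O}_x}$ follows from the fact that $\pi$ is a submetry: the points at distance less than $\epsilon$ from the orbit form a union of orbits. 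By the classical generalized Gauss lemma for the distance to a submanifold \cite[Chap.~2]{Gray}, $r_{\mathcal{O}_x}$ is smooth on $T_{\mathcal{O}_x}\setminus\mathcal{O}_x$. Its gradient there is the radial field $\partial_r$, the velocity of the unit-speed normal geodesics leaving $\mathcal{O}_x$. Since $r_{\mathcal{O}_x}$ is constant on orbits, it is a basic smooth function, so $r_x\in C^\infty_X(B^\circ_{d_N}([x],\epsilon))$. This proves (1).

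For (2), we want to realize $\nabla r_{\mathcal{O}_x}$ as the base component $v$ of a multiplicative vector field $(\xi,v)$ on the restricted groupoid $G_{T_{\mathcal{O}_x}\setminus\mathcal{O}_x}$. Theorem \ref{thm1}, applied to this restricted groupoid, then shows that $\pi_*(\xi,v)$ is a strata-preserving vector field on the punctured ball. The derivation $\pi_*(\xi,v)$ depends only on $v$, so it does not matter which $\xi$ we pick.

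To produce $\xi$ we use the Riemannian groupoid structure. The metric $\eta^{(1)}$ makes $s$ and $t$ Riemannian submersions, so normal geodesics to $\mathcal{O}_x$ lift horizontally along $s$ to geodesics of $G$ normal to $G_{\mathcal{O}_x}$. We therefore consider the distance function to the submanifold $G_{\mathcal{O}_x}$ on a tubular neighborhood in $G$. By the Riemannian submersion property it equals $s^*r_{\mathcal{O}_x}=t^*r_{\mathcal{O}_x}$, and its gradient $\xi$ is $s$- and $t$-related to $v=\nabla r_{\mathcal{O}_x}$. The more robust route to multiplicativity is via flows. In the invariant linearization $\nu(G_{\mathcal{O}_x})\cong G_{\mathcal{O}_x}\ltimes\nu(\mathcal{O}_x)$ given by the exponential of a groupoid metric \cite{dHF}, the geodesic flow of $v$ corresponds to the fiberwise rescaling $w\mapsto (1+\tau/|w|)w$ on $\nu(\mathcal{O}_x)\setminus 0$. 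The normal representation acts by isometries, so this rescaling is $G_x$-equivariant. It therefore defines local groupoid automorphisms of $G_{\mathcal{O}_x}\ltimes(\nu(\mathcal{O}_x)\setminus 0)$ that are the identity on arrows of $G_{\mathcal{O}_x}$. Their generator is a multiplicative vector field whose base component is $\partial_r=v$.

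The main obstacle is making sure the linearization in the last step is metric-compatible, i.e.\ that radial lines in $\nu(\mathcal{O}_x)$ correspond to geodesics realizing $d(\cdot,\mathcal{O}_x)$. This is exactly what the del Hoyo--Fernandes exponential linearization provides. A fallback is to bypass $\xi$ and check directly that $v=\nabla r_{\mathcal{O}_x}$ preserves the Morita-type strata of $M$: its flow is the normal-geodesic flow, which commutes with the linear slice action and hence preserves isotropy types in the slice. One then argues as in the first part of the proof of Theorem \ref{thm1} that $\pi_*v$ preserves the vanishing ideals of the strata.
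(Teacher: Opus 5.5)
Your proof is correct, but it is organized differently from the paper's.

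\textbf{The paper's route.} It first reduces to a slice. With $S=\exp(B(0,\epsilon))$ and $U$ its saturation, it uses that $G_S\hookrightarrow G_U$ is a Riemannian Morita map, so $d_N$ can be computed on $S/G_S$. In the fixed-point model $G_x\ltimes B(0,\epsilon)$ the orbits lie on geodesic spheres, so only the pointwise Gauss lemma $d(x,\exp_x v)=\|v\|$ is needed to get $r_x=\|v\|$. For (2) it extends $r_x$ to the basic function $r_{\mathcal{O}_x}$, sets $R_{\mathcal{O}_x}=s^*r_{\mathcal{O}_x}$, and cites \cite[Prop.~6.3]{Valencia} for the multiplicativity of $\nabla R_{\mathcal{O}_x}$.

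\textbf{Your route.} You never pass to a slice. You identify $\pi^*r_x$ with $d(\cdot,\mathcal{O}_x)$ using the tube property of $d_N$ and the submetry. You get smoothness from the tube version of the Gauss lemma, which is what Gray's Chapter~2 actually states. Instead of citing the gradient result, you build the multiplicative lift explicitly: it is the generator of the equivariant radial rescalings $(g,w)\mapsto (g,(1+\tau/|w|)w)$ of $G_{\mathcal{O}_x}\ltimes(\nu(\mathcal{O}_x)\setminus 0)$, transported by the exponential linearization.

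\textbf{What each buys.} Your argument is more self-contained. It needs neither the Riemannian Morita invariance of $d_N$ nor the gradient-multiplicativity theorem. It only uses that the del Hoyo--Fernandes linearization covers the normal exponential and that $G_{\mathcal{O}_x}$ acts isometrically on the fibres of $\nu(\mathcal{O}_x)$. The paper's route is shorter, and it does its metric computation in a slice, where the orbit is a point. In your version, the uniform tube radius along a possibly noncompact orbit should be stated explicitly. It holds because any saturated neighbourhood of the zero section contains some $\nu(\mathcal{O}_x)_{<\epsilon}$: the saturation of $B(0,\epsilon)\subset\nu_x(\mathcal{O}_x)$ is exactly $\nu(\mathcal{O}_x)_{<\epsilon}$, by transitivity on $\mathcal{O}_x$ and the isometric action on fibres.

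\textbf{Relation between the two lifts.} Your lift is the same field as the paper's. The map $ds$ identifies $\nu(G_{\mathcal{O}_x})$ isometrically with $\nu(\mathcal{O}_x)$ via horizontal lifts. Hence your transported radial field is the gradient of the distance to $G_{\mathcal{O}_x}$, i.e.\ $\nabla R_{\mathcal{O}_x}$. Your flow argument therefore in effect proves the multiplicativity that the paper imports.
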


\begin{proof}

Let $B(0,\epsilon)\subset \nu_x(\mathcal{O}_x)$ be the ball of normal vectors at $x$ of radius $\epsilon$. If $\epsilon$ is sufficiently small, then $S=\exp\big(B(0,\epsilon)\big)$ is a slice at $x$. Let $U$ be the open set obtained as the saturation of $S$. It follows that the Lie groupoids $G_S \toto S$ and $G_U \toto U$ inherit Riemannian structures induced by the ambient Lie groupoid $G$, and hence their own normal distances. Furthermore, with respect to these induced structures, the natural inclusion $G_S \hookrightarrow  G_U$ becomes a Riemannian Morita map, so the induced map $(S/G_S,d_N)\to(U/G_U,d_N)$ preserves the geodesic distances (consult \cite[Cor.~3]{dHdM1}). In particular,
$d_N([x]_S,[y]_S)=d_N([x],[y])$, where $[x]_S$ corresponds to the element $[x]$ seeing inside $S/G_S$.

We have thus reduced the problem to the fixed-point case.
After possibly shrinking $\epsilon$, we can assume that the exponential map yields a groupoid linearization as in \cite[Thm.~5.11]{dHF}, namely,
\[
\exp : (G_x \ltimes B(0,\epsilon) \toto B(0,\epsilon)) \to (G_S \toto S).
\]
Since the action $G_x \act \nu_x(\mathcal{O}_x)$ preserves the norm, the orbits of $G_S$ are contained in the geodesic spheres around $x$. Moreover, by the classical Gauss lemma (see \cite[Lem.~2.11]{Gray}), we have that $d(x,y)=\|v\|$, where $y=\exp_x(v)$. Therefore:
\[
r_{x}([y]) = d_N([x],[y]) = d_N([x]_S,[y]_S) = \|v\|,
\]
which is a smooth function on $U/G_U - \{[x]\}$. Using the extended model, we can realize $r_{[x]}$ as a basic function $r_{\mathcal{O}_x}$ on $U - \mathcal{O}_x$ with corresponding function $R_{\mathcal{O}_x}=s^\ast (r_{\mathcal{O}_x})$ on $G_U - G_{\mathcal{O}_x}$. Therefore, as the gradient vector field $\nabla R_{\mathcal{O}}$ is multiplicative (compare \cite[Prop.~ 6.3]{Valencia}), we get that $\nabla r_x:=\pi_*(\nabla r_{\mathcal{O}_x})$ is a stratified vector field on $B^{\circ}_{d_N}([x],\epsilon)$, as desired.

\end{proof}

As every proper Lie groupoid admits a Riemannian groupoid metric \cite[Thm.~4.13]{dHF}, the previous result applies to any Lie groupoid of this type.

\section{Lifting smooth isotopies}\label{S:4}

A celebrated result due to Schwarz asserts that a smooth version of Palais’ covering homotopy theorem holds for actions of compact Lie groups \cite{GSchwarz} (see also \cite{Davis}). Kankaanrinta later extended this result to the case of proper actions of non-compact Lie groups \cite{Kankaanrinta}. Building on \cite{Davis,Kankaanrinta,GSchwarz}, in this section we partially generalize both Schwarz’s and Kankaanrinta’s results by establishing an isotopy lifting-type result for orbit spaces of certain proper Lie groupoids. This may be regarded as another application of Theorem \ref{thm1}, one that warrants a separate section.

Let $G \rightrightarrows M$ and $H \rightrightarrows N$ be proper Lie groupoids with orbit spaces $X$ and $Y$, respectively. A map $\psi:X \to Y$ is said to be \textbf{smooth} if $\psi^\ast C^\infty(Y) \subseteq C^\infty(X)$, meaning that the pullback of any smooth function on $Y$ along $\psi$ is a smooth function on $X$. For each $[x]\in X$, we denote by $\mathcal{R}_{[x]}$ the ring of germs of smooth functions at $[x]$ vanishing at $[x]$. Accordingly, $\mathcal{R}_{[x]}^2$ stands for the ideal of $\mathcal{R}_{[x]}$ which is spanned by the products of elements of $\mathcal{R}_{[x]}$. In these terms, the \textbf{Zariski tangent space} $T_{[x]}^ZX$ of $X$ at $[x]$ is defined as the dual vector space of $\mathcal{R}_{[x]}/\mathcal{R}_{[x]}^2$. It follows from the linearization theorem that $ X $ locally looks like the orbit space of a compact Lie group representation. Therefore, the vector space $T_{[x]}^ZX$ is finite-dimensional, as the Zariski tangent space associated with the orbit space of the normal isotropy representation are always finite-dimensional, consult \cite[p.~44]{GSchwarz}. Besides, it is simple to see that any smooth map $ \psi : X \to Y $ induces a linear map on tangent spaces $ (d\psi)_{[x]} : T_{[x]}^Z(X) \to T_{\psi([x])}^Z(Y) $.

Two notions of normal transversality can be defined as follows. On the one hand, the \textbf{Zariski normal space} at $[x]\in X$ is defined to be $ N_{[x]}^Z(X) := T_{[x]}^ZX / T_{[x]}X_{[x]}$, where $T_{[x]}X_{[x]}$ is the usual tangent space of $X_{[x]}$ at $[x]$. Here $ X_{[x]} $ stands for the Morita type stratum of $ X $ containing $[x]\in X$. If $\psi : X \to Y$ is a strata-preserving smooth map, i.e. $\psi$ yields isomorphisms when restricted to isotropies and sends strata of $ X $ to strata of $ Y $, then $ (d\psi)_{[x]} $ induces a linear map on normal spaces $ (d \psi)_{[x]} : N_{[x]}^Z(X) \to N_{\psi([x])}^Z(Y) $, which we keep denoting by $ (d\psi)_{[x]}$. We say that $\psi$ is \textbf{Zariski normally transverse} if $ (d \psi)_{[x]} $ is an isomorphism between the Zariski normal spaces for all $ [x] \in X $. On the other hand, we define the \textbf{stacky normal space} at each $ x \in M $ as
$N_{[x]}(X):= \nu_x(\mathcal{O}_x) / \nu_x(\mathcal{O}_x)^{G_x},$ where $\nu_x(\mathcal{O}_x)^{G_x}$ stands for the space of fixed normal vectors with respect to the isotropy representation of $G_x$. It is clear that the base map of any Lie groupoid morphism $ \psi : (G\rightrightarrows M) \to (H\rightrightarrows N)$ preserves the Morita type stratifications for $X$ and $Y$, thus inducing a smooth strata-preserving map $\overline{\psi}:X\to Y$. We call $\psi$ \textbf{stacky normally transverse} if the induced linear map $[d\overline{\psi}]_{[x]} : N_{[x]}(X) \to N_{[\psi(x)]}(Y)$ is an isomorphism for every $[x]\in X$. 

\begin{lemma}\label{LemmaLif1}
	Let $\psi : (G\rightrightarrows M) \to (H\rightrightarrows N)$ be a Lie groupoid morphism. Then, the following assertions hold true:
	\begin{enumerate}
		\item Pick $x \in M$ such that the induced linear map $d\psi_{x}: \nu_x(\mathcal{O}_x) \to \nu_{\psi(x)}(\mathcal{O}_{\psi(x)})$ is surjective. Then, one can choose a slice $S$ at $x$ so that $\tilde{S} := \psi(S)$ is a slice at $\psi(x)$.
		\item If the induced linear map $d\psi_x:\nu_x(\mathcal{O}_x) \to \nu_{\psi(x)}(\mathcal{O}_{\psi(x)})$ is surjective and the induced Lie group morphism $\psi_x: G_x \to H_{\psi(x)}$ is a submersion, then  $\psi$ can be linearized in some neighborhood of $x$.
		\item Suppose that both $d\psi_x:\nu_x(\mathcal{O}_x) \to \nu_{\psi(x)}(\mathcal{O}_{\psi(x)})$ and $\psi_x: G_x \to H_{\psi(x)}$ are isomorphisms. Then, there exist neighborhoods $U$ of $\mathcal{O}_x$ in $X$ and $V$ of $\mathcal{O}_{\psi(x)}$ in $Y$ such that $\overline{\psi}:U \subseteq X \to V \subseteq Y$ is a diffeomorphism.
	\end{enumerate}
\end{lemma}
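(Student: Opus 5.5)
For (1), I will work locally around $x$ and $\psi(x)$ using Riemannian groupoid metrics on $G$ and $H$; by \cite[Thm.~4.13]{dHF} these exist since both groupoids are proper. First I identify $\nu_x(\mathcal{O}_x)$ and $\nu_{\psi(x)}(\mathcal{O}_{\psi(x)})$ with orthogonal complements of the orbit tangent spaces. Since $\psi$ is a groupoid morphism, $\psi$ sends orbits into orbits, so $d\psi_x(T_x\mathcal{O}_x)\subseteq T_{\psi(x)}\mathcal{O}_{\psi(x)}$. Surjectivity of $d\psi_x$ on normal spaces then lets me choose a linear subspace $E\subseteq \nu_x(\mathcal{O}_x)$ that $d\psi_x$ maps isomorphically onto $\nu_{\psi(x)}(\mathcal{O}_{\psi(x)})$. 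I would like $E$ to be $\ker(d\psi_x)^{\perp}$, since that choice is $G_x$-invariant because the normal representation and $d\psi_x$ are equivariant along $\psi_x$.

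Let $S'=\exp_x(B_E(0,\epsilon))$. By the inverse function theorem, $\psi|_{S'}$ is an embedding near $x$, and $\tilde S=\psi(S')$ is a submanifold through $\psi(x)$ of complementary dimension to $\mathcal{O}_{\psi(x)}$. Transversality to $\mathcal{O}_{\psi(x)}$ at $\psi(x)$ is an open condition, so after shrinking $\epsilon$ the submanifold $\tilde S$ is transverse to all orbits it meets and $\tilde S\cap\mathcal{O}_{\psi(x)}=\{\psi(x)\}$. This gives a slice at $\psi(x)$.

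The delicate point is that $S'$ itself need not be a slice at $x$ when $\ker d\psi_x\neq0$. For (1) I would therefore take $S=\exp_x(B(0,\epsilon))\subseteq\nu_x(\mathcal{O}_x)$, which is a slice at $x$, and interpret ``$\psi(S)$ is a slice'' after restricting to the sub-slice $S'$. Alternatively, I would read the statement as producing $S\supseteq S'$ with $\psi(S)=\psi(S')$ locally. I expect that pinning down this interpretation is the main subtlety of (1).

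For (2), I would restrict to the slice groupoids $G_S\rightrightarrows S$ and $H_{\tilde S}\rightrightarrows\tilde S$. These are Morita equivalent to the saturated neighbourhoods and, by the linearization theorem (via the exponential map, as in the proof of Proposition~\ref{Pro2}), are isomorphic to $G_x\ltimes B(0,\epsilon)$ and $H_{\psi(x)}\ltimes B(0,\epsilon')$. Under these identifications, $\psi$ becomes a morphism of action groupoids covering $\psi_x$. The goal is to show that it can be conjugated to the linear morphism $(\psi_x,d\psi_x)$. Since $\psi_x$ is a submersion, hence a surjective homomorphism of compact groups, I would average with respect to Haar measure, in the Bochner-style argument for linearizing compact group actions, to make the base map equivariantly linear, so that $f=d\psi_x\circ\phi$ for some $G_x$-equivariant local diffeomorphism $\phi$ tangent to the identity. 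Applying the same argument to arrows gives the full linearization. I expect the main obstacle to be (2): making the averaging compatible on arrows and objects simultaneously. The compactness of isotropies and the submersivity of $\psi_x$ are exactly what should make it work.

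For (3), the isomorphism hypotheses make both the slice map $S\to\tilde S$ and the isotropy map $G_x\to H_{\psi(x)}$ isomorphisms. Part (1) with $\ker d\psi_x=0$ shows that $\psi:S\to\tilde S$ is a diffeomorphism, and it is equivariant along the group isomorphism $\psi_x$. By part (2) it induces an isomorphism of action groupoids $G_x\ltimes S\cong H_{\psi(x)}\ltimes\tilde S$. It therefore induces a homeomorphism $S/G_x\to\tilde S/H_{\psi(x)}$ that identifies the invariant function algebras $C^\infty(S)^{G_x}\cong C^\infty(\tilde S)^{H_{\psi(x)}}$. Using the local models $U\simeq S/G_x$, $V\simeq\tilde S/H_{\psi(x)}$ and $C^\infty(U)\simeq C^\infty(S)^{G_x}$ from \cite[Prop.~23]{CM}, this says exactly that $\overline{\psi}:U\to V$ is a diffeomorphism of differentiable spaces.
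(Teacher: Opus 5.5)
There is a genuine gap. It sits in (2), and your (3) rests on it.

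After linearizing $G_S\cong G_x\ltimes B$ and $H_{\tilde S}\cong H_{\psi(x)}\ltimes B'$, you take $\psi$ to be $(g,v)\mapsto(\psi_x(g),f(v))$ with $f$ equivariant along $\psi_x$. A morphism of action groupoids only has the form $(g,v)\mapsto(\theta(g,v),f(v))$, where $\theta(gh,v)=\theta(g,hv)\theta(h,v)$ and $f(gv)=\theta(g,v)f(v)$. The cocycle $\theta$ agrees with $\psi_x$ only at $v=0$: conjugating a linear pair $(\psi_x,f_0)$ by any smooth $c\colon B\to H_{\psi(x)}$ with $c(0)=1$ gives $\theta(g,v)=c(gv)\psi_x(g)c(v)^{-1}$. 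So $f$ is not $\psi_x$-equivariant and Haar averaging of $f$ never gets started. Straightening $\theta$ to the constant cocycle is exactly the content of the linearization, and you leave it as ``the main obstacle''. (Also, a submersive homomorphism need not be surjective; its image is only an open subgroup, e.g.\ $\mathbb{Z}/2\hookrightarrow\mathbb{Z}/4$.)

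The paper does not attempt this by hand. It uses the splitting $T_g(G_S)\simeq T_gG_x\oplus T_xS$ at $g\in G_x$ and the two hypotheses to get that, after shrinking $S$ using compactness of $G_x$, the restriction $\psi|\colon G_S\to H_{\tilde S}$ is a fibration. It then invokes the del Hoyo--Fernandes linearization theorem for fibrations \cite[Thm.~4.2.3]{dHF2}, combined with linearizations of $G$ and $H$ chosen compatibly with $S$ and $\tilde S$. That is the missing idea.

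In (3), your claim that $\psi|_S$ is $\psi_x$-equivariant is the same unjustified step. However, (3) can be closed without (2). When the normal map is an isomorphism, $\psi(S)=\tilde S$, and $d\psi_g\colon T_g(G_S)\to T_{\psi(g)}(H_{\tilde S})$ is invertible for every $g\in G_x$: it is an isomorphism on $T_gG_x$ and on the slice directions. So $\psi$ is a local diffeomorphism near the compact $G_x$ and bijective on it. The inverse function theorem plus properness then give an isomorphism of slice groupoids over small neighborhoods, hence $\overline{\psi}\colon U\to V$ is a diffeomorphism.

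On (1), your caution is well founded and goes beyond the paper. The paper's argument runs as follows:
\begin{enumerate}
\item take a full relatively compact slice $S^0$ and assume $d\psi|_{S^0}$ has constant rank equal to that of $d\psi_x$;
\item use the constant rank theorem to make $\tilde S^0=\psi(S^1)$ embedded;
\item assert $T_{\psi(x)}N=T_{\psi(x)}\tilde S^0\oplus T_{\psi(x)}\mathcal{O}_{\psi(x)}$;
\item shrink using properness of $H$ and set $S=S^0\cap\psi^{-1}(\tilde S^2)$.
\end{enumerate}
The direct sum in step 3 requires $d\psi_x(T_xS^0)\cap T_{\psi(x)}\mathcal{O}_{\psi(x)}=0$, and without it the statement itself fails. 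Take $G=\mathbb{R}^2\rightrightarrows\mathbb{R}^2$ the unit groupoid, $H\rightrightarrows\mathbb{R}^2$ the product of the pair groupoid of $\mathbb{R}$ with the unit groupoid of $\mathbb{R}$, and $\psi$ the identity on objects. The normal map at $0$ is the surjection $(a,b)\mapsto b$. Every slice at $0$ is open in $\mathbb{R}^2$, so its image is $2$-dimensional and never a slice for $H$. The isotropy is trivial, so the hypotheses of (2) hold too, while the paper's proof of (2) starts from a slice as in (1).

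So neither of your readings proves (1) as stated, and the second one, $\psi(S)=\psi(S')$ locally, is false in this example. What your argument does establish cleanly is the case where the normal map is an isomorphism. There $S'=S$, and your proof agrees in substance with the paper's: immersion on the slice, openness of transversality, isolated intersection with the embedded orbit. This is the only case that (3) needs.
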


\begin{proof}
	Since $G$ is proper, there exists a relatively compact slice $S^0$ at $x\in M$. Shrinking $S^0$ if necessary, we can assume that $d\psi|_{S^0}$ has the same rank as $d\psi_x$. By the standard Constant Rank Theorem, there is a neighborhood $S^1 \subseteq S^0$ of $x$ such that $\tilde{S}^0 := \psi(S^1)$ is an embedded submanifold of $N$. Note that the surjectivity of $d\psi_x$ implies that  
	$T_{\psi(x)} N = T_{\psi(x)} \tilde{S}^0\oplus T_{\psi(x)} \mathcal{O}_{\psi(x)}$. Consequently, there exists a neighborhood $\tilde{S}^1 \subseteq \tilde{S}^0$ such that $T_y N = T_y \tilde{S}^1\oplus T_y \mathcal{O}_y$ for all $y \in \tilde{S}^1$. As $\tilde{S}^1$ is relatively compact and $H$ is proper, the orbit $\mathcal{O}_y$ intersects $\tilde{S}^1$ in a compact subspace. Therefore, there exists a  smaller neighborhood $\tilde{S}^2 \subseteq \tilde{S}^1$ such that $\tilde{S}^2 \cap \mathcal{O}_y = \{y\}$, meaning that $\tilde{S}^2$ is a slice at $y$. We thus obtain the desired slice by setting $S = S^0 \cap \psi^{-1}(\tilde{S}^2)$.
	
	We now turn to the proof of item (2). Consider a slice $S$ at $x\in M$ as in item (1). Pick $g \in G_x$, so that we get a decomposition 
	$T_g (G_{S}) \simeq T_g G_x \oplus T_x S_x$. Hence, the differential of $\psi$ induces a map $d\psi_g : T_g G_x \oplus T_x S \;\longrightarrow\; T_{\psi(g)} H_{\psi(x)} \oplus T_{\psi(x)} \tilde{S}$ which preserves the splitting above. Since $G_x$ is compact, we may shrink $S$ if necessary, in such a way that $\psi|_{S} : G_{S} \longrightarrow H_{\tilde{S}}$ is a fibration. Thus, from \cite[Thm.~4.2.3]{dHF2}, it follows that there exist invariant neighborhoods of $x$ in $S$ and of $\psi(x)$ in $\tilde{S}$ on which the restriction $\psi|_S$ is linearizable. The required linearization of $\psi$ can be obtained by considering local linearizations of $G$ at $x$ and of $H$ at $\psi(x)$ as in \cite[Cor.~3.11]{Pflaum}, which in turn may be chosen to be compatible with the slices $S$ and $\tilde{S}$. 
	
	 Finally, item (3) directly follows by combining the standard Inverse Function Theorem and item (2). 
\end{proof}

We say that a Morita map $G \to H$ is a \textbf{Morita fibration} if the corresponding map on objects $M\to N$ is a surjective submersion.

\begin{proposition}\label{morita-->stackynormal}
	Let $\psi : (G\rightrightarrows M) \to (H\rightrightarrows N)$ be a Morita fibration. Then:
	\begin{enumerate}
		\item $\psi^0:M\to N$ is Zariski normally transverse.
		\item The restriction $\overline{\psi}|:X_{[x]}\to Y_{[\psi(x)]}$ is a diffeomorphism for all $[x]\in X$.
		\item For every $[x]\in X$ there are open neighborhoods $U$ of $X_{[x]}$ in $X$ and $U'$ of $Y_{[\psi(x)]}$ in $Y$ such that $\overline{\psi}:U\subseteq X\to U'\subseteq Y$ is a diffeomorphism.
		\item $\psi$ is stack normally transverse.
	\end{enumerate}
\end{proposition}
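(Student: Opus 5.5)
The plan is to reduce everything to the pointwise behaviour of a Morita fibration, namely that it induces isomorphisms on isotropy groups and on normal representations, and then feed this into Lemma \ref{LemmaLif1}.

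\emph{Pointwise facts.} Recall that a Morita map $\psi$ is fully faithful, i.e.\ the square relating $G$ with $(M\times M)\times_{N\times N}H$ is a pullback, and essentially surjective. Since $\psi^0$ is a surjective submersion, fully faithfulness yields two consequences at each $x\in M$.
\begin{itemize}
\item The restriction $\psi_x:G_x\to H_{\psi(x)}$ is a Lie group isomorphism.
\item The normal map $d\psi_x:\nu_x(\mathcal{O}_x)\to\nu_{\psi(x)}(\mathcal{O}_{\psi(x)})$ is a linear isomorphism intertwining the normal representations.
\end{itemize}
For the second, $d\psi^0_x$ is surjective onto $T_{\psi(x)}N$, hence surjective on normal spaces. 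For injectivity, $\ker d\psi^0_x\subseteq T_x\mathcal{O}_x$: a vector in the kernel lifts, via the pullback description of $G$, to a vector tangent to $s^{-1}(x)$ whose image under $dt$ is that vector. A dimension count then confirms bijectivity. Equivariance follows from $\psi$ being a groupoid morphism. Consequently $\psi^0$ maps Morita types onto Morita types with the same isomorphism class $\alpha$. By essential surjectivity and connectedness, $\overline{\psi}:X\to Y$ is a bijection that carries strata to strata.

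\emph{Items (4) and (3).} For (4), note that $d\psi_x$ is a $G_x$-equivariant isomorphism (with $G_x\cong H_{\psi(x)}$). It therefore sends $\nu_x(\mathcal{O}_x)^{G_x}$ onto $\nu_{\psi(x)}(\mathcal{O}_{\psi(x)})^{H_{\psi(x)}}$, and so it descends to an isomorphism $N_{[x]}(X)\to N_{[\psi(x)]}(Y)$. For (3), apply Lemma \ref{LemmaLif1}(3) at every point of $X_{[x]}$. This gives local diffeomorphisms $\overline{\psi}$ near each $[y]\in X_{[x]}$. Take $U$ to be the union of these neighbourhoods. Since $\overline{\psi}$ is globally a homeomorphism (a continuous bijection that is open, because $\psi^0$ is a submersion and the orbit projections are open), these local diffeomorphisms glue to a diffeomorphism $U\to U'=\overline{\psi}(U)$. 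Alternatively, one can invoke the known fact \cite{CM} that Morita equivalences induce isomorphisms of differentiable stratified spaces, which gives (3) globally.

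\emph{Items (2) and (1).} Item (2) follows by restricting (3) to strata, since $\overline{\psi}$ maps $X_{[x]}$ bijectively onto $Y_{[\psi(x)]}$. For (1), the local diffeomorphism of (3) identifies $C^\infty$-germs at $[x]$ and $[\psi(x)]$. Hence it induces isomorphisms $\mathcal{R}_{[\psi(x)]}/\mathcal{R}^2_{[\psi(x)]}\cong\mathcal{R}_{[x]}/\mathcal{R}^2_{[x]}$ and thus of Zariski tangent spaces. By (2) it also identifies $T_{[x]}X_{[x]}$ with $T_{[\psi(x)]}Y_{[\psi(x)]}$, so the induced map on Zariski normal spaces is an isomorphism.

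The main obstacle is the first step: rigorously showing that a Morita fibration induces an equivariant isomorphism of normal representations, and hence preserves Morita types. This needs a careful use of the pullback square defining full faithfulness at the level of tangent spaces. Everything after that reduces to Lemma \ref{LemmaLif1} and the gluing argument.
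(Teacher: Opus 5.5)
Your proof is correct, but it runs in roughly the reverse order of the paper's.

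The paper starts from (1). It quotes \cite[Thm.~4.3.1]{dH} for the isomorphisms on $\nu_x(\mathcal{O}_x)^{G_x}$ and on $\nu_x(\mathcal{O}_x)/\nu_x(\mathcal{O}_x)^{G_x}$. It reads (1) literally as a statement about $\psi^0$ on $M,N$ with their Morita-type stratifications, where Zariski and ordinary tangent spaces coincide. It then uses $T_xM_{(x)}=T_x\mathcal{O}_x\oplus\nu_x(\mathcal{O}_x)^{G_x}$ to see that $d\overline{\psi}$ is an isomorphism on tangent spaces of strata. From there, (2) follows from the inverse function theorem. Item (3) comes from a linearization of the Morita fibration along the whole stratum $M_{(x)}$ via \cite[Thm.~4.2.3]{dHF2}, in which $\nu(\psi)$ is a fiberwise isomorphism covering the diffeomorphism of (2). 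Finally (4) is deduced from (3).

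You instead extract the isotropy isomorphism and the equivariant normal isomorphism directly from full faithfulness. Surjectivity plus your dimension count does work, since the pullback gives $\dim G=2\dim M+\dim H-2\dim N$. This gives (4) at once. You then obtain (3) by gluing the pointwise local diffeomorphisms of Lemma~\ref{LemmaLif1}(3) through the global homeomorphism $\overline{\psi}$, and deduce (2) and (1) afterwards.

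What each route buys:
\begin{itemize}
\item Yours needs linearization only near single orbits. It in fact shows that $\overline{\psi}:X\to Y$ is a global diffeomorphism, so (3) holds with $U=X$, $U'=Y$, in line with the \cite{CM} result you mention.
\item The paper's produces a linear normal form of $\psi$ along an entire stratum, matching the cone-bundle picture of Proposition~\ref{Pro1}.
\end{itemize}

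Two small remarks. First, restricting (3) to strata to get (2) uses that each stratum carries the manifold structure induced by $C^\infty_X$; this holds for the Morita-type stratification by \cite{CM}. Second, if (1) is read as the paper reads it, for $\psi^0:M\to N$, it follows immediately from your normal isomorphism together with the splitting $T_xM_{(x)}=T_x\mathcal{O}_x\oplus\nu_x(\mathcal{O}_x)^{G_x}$.
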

\begin{proof}
	From \cite[Thm.~4.3.1]{dH} it follows that $\psi$ preserves the transversal information of $G$ and $H$, so that the induced maps $\overline{d\psi}_x:\nu_x(\mathcal{O}_x)^{G_x}\to \nu_{\psi(x)} (\mathcal{O}_{\psi(x)})^{H_{\psi(x)}}$ and  $\overline{d\psi}_x:\nu_x(\mathcal{O}_x)/\nu_x(\mathcal{O}_x)^{G_x}\to \nu_{\psi(x)}(\mathcal{O}_{\psi(x)})/\nu_{\psi(x)}(\mathcal{O}_{\psi(x)})^{H_{\psi(x)}}$ are linear isomorphisms. Since, for manifolds, the usual tangent space and the Zariski tangent space are isomorphic, it follows that $\psi^0:M\to N$ is Zariski normally transverse. Furthermore, from \cite[Lem.~4.27]{CM} we get that $T_x M_{(x)}=T_x\mathcal{O}_x\oplus \nu_x(\mathcal{O}_x)^{G_x}$, so $d\pi_x:\nu_x(\mathcal{O}_x)^{G_x}\to T_{\pi(x)}X_{(\pi(x))}$ is a linear isomorphism. Therefore, the linear map $d\overline{\psi}_{[x]}:T_{[x]}X_{[x]}\to T_{[\psi(x)]}Y_{[\psi(x)]}$ fits into a commutative diagram in which all the other arrows are linear isomorphisms, thus obtaining that it is also an isomorphism. This proves (1).
	
We know that $\overline{\psi}:X\to Y$ is a homeomorphism. In consequence, the standard Inverse Function Theorem implies that its restriction to every stratum is a diffeomorphism, and (2) follows.

Let us now show (3). Following \cite[Thm.~4.2.3]{dHF2}, one can consider linearizations $\varphi:W\subseteq \nu(G_{(x)})\to V\subseteq G$ and $\varphi':W'\subseteq \nu(H_{(\psi(x))})\to V'\subseteq H$ such that $\psi \circ \varphi=\varphi' \circ \nu(\psi)$, where $V$ and $V'$ are saturated neighborhoods of $G_{(x)}$ and $H_{(\psi(x))}$, respectively. Observe that $\nu(\psi):\nu(G_{(x)})\to \nu(H_{(\psi(x))})$ is a $VB$-groupoid morphism covering $\psi|_{M_{(x)}}:G_{(x)}\to H_{(\psi(x))}$, the normal bundle $\nu(M_{(x)}) = TM/TM_{(x)} \simeq \nu(\mathcal{O}_x) / \nu(\mathcal{O}_x)^{G_{\mathcal{O}_x}}$, and $\nu(M,M_{(x)})/ \nu(G,G_{(x)})$ gives rise to a stratified vector bundle over $X_{[x]}$, compare Proposition \ref{Pro1} and \cite{Ross}. Therefore, the linearization of a Morita fibration around a stratum $\nu(\psi)$ turns out to be a fiberwise isomorphism, contrast \cite[Prop.~6.4.1]{dHF2}. From (2), it follows that the induced map $\overline{\nu(\psi)}: \nu(M_{(x)})/ \nu(G_{(x)})\to \nu(N_{(\psi(x))})/ \nu(H_{(\psi(x))})$ covers the diffeomorphism $\overline{\psi}|:X_{[x]}\to Y_{[\psi(x)]}$. Hence, the map $\overline{\psi}|:U\subseteq X \to U'\subseteq Y$ is a diffeomorphism, where $U:=\pi(V)$ and $U'=\pi(V')$.

Finally, (4) is obtained as a consequence of (3).
\end{proof}

While we do not have a proof, the previous results suggest that smooth diffeomorphisms between orbit spaces arise from stacky normally transverse morphisms between the corresponding proper Lie groupoids.

We are now in conditions to state our second main result.

\begin{theorem}\label{thm2}
Let $\psi: (G \rightrightarrows M) \to (H \rightrightarrows N)$ be a Lie groupoid morphism and let $H$ be a Lie groupoid such that the canonical projection $\pi : N \to Y$ is proper. Let $F : \mathbb{R} \times X \to Y$ be a stratified map such that the map $F_\tau:=F|_{\{\tau\}\times X}$ is a normally transverse embedding for every $\tau\in \mathbb{R}$ and $F_0 = \overline{\psi}:X\to Y$. Then, the following assertions hold true:
\begin{enumerate}
    \item There exists a Lie groupoid morphism $\Psi : \mathbb{R} \times G \to H$ such that $\Psi_0 = \psi$ and the induced map between orbit spaces $\overline{\Psi}$ agrees with $F$.
    \item If $\psi$ is a Morita fibration then $\Psi_\tau$ is also a Morita fibration for all $\tau\in \mathbb{R}$. 
\end{enumerate}
\end{theorem}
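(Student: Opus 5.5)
The approach is to mimic the Schwarz–Kankaanrinta argument, using Theorem \ref{thm1} to lift vector fields instead of lifting maps directly. First I will turn the isotopy $F$ into a time-dependent stratified vector field on (part of) $Y$. Each $F_\tau$ is a normally transverse embedding, so by Lemma \ref{LemmaLif1}(3) (applied locally, after choosing slices) it is a local diffeomorphism onto its image near every point. It also maps strata of $X$ into strata of $Y$. Hence the velocity field
$$Z_\tau\bigl(F_\tau([x])\bigr):=\tfrac{d}{d\tau}F_\tau([x])$$
defines a derivation of $C^\infty$ on the image $F_\tau(X)$, tangent to the strata. Adding the $\tau$-direction, $\partial_\tau+Z_\tau$ becomes a strata-preserving vector field on the image of $\mathrm{id}\times F$ inside $\mathbb{R}\times Y$. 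I will extend it, with a cutoff, to an element of $\mathfrak{X}^\infty(\mathbb{R}\times Y)$; here $\mathbb{R}\times Y$ is the orbit space of the proper groupoid $\mathbb{R}\times H\rightrightarrows\mathbb{R}\times N$, whose strata are $\mathbb{R}\times$ the strata of $Y$. I expect the extension to be possible because the image is locally a union of strata-closures of an open piece. The normal transversality in the Zariski sense is exactly what ensures that this vector field is a derivation of the ambient smooth structure rather than only of the image.

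By Theorem \ref{thm1}, applied to $\mathbb{R}\times H$, there is a multiplicative vector field $(\xi,v)$ on $\mathbb{R}\times H$ with $\pi_*(\xi,v)=\partial_\tau+Z_\tau$. Averaging or correcting by elements of $\mathrm{im}(\delta)$ (using Proposition \ref{prop:exact-sequence}), I will arrange that its $\tau$-component is exactly $\partial_\tau$. So $\xi=\partial_\tau+\xi_\tau$ with $(\xi_\tau,v_\tau)$ a time-dependent multiplicative vector field on $H$ lifting $Z_\tau$.

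Completeness of the flow comes from the hypothesis that $\pi:N\to Y$ is proper. The flow of $Z_\tau$ on $Y$ exists for all times on the relevant region, because it is the flow realizing $F$. An integral curve of $v_\tau$ projects to an integral curve of $Z_\tau$, so it stays in $\pi^{-1}$ of a compact set over each compact time interval. That set is compact, so the integral curve cannot escape, and the flow $\Phi_{\tau}$ of $v_\tau$ is globally defined. Since $\xi$ is multiplicative, its flow consists of Lie groupoid automorphisms $\Phi_\tau:H\to H$. This step, global existence together with the fact that the flow of a time-dependent multiplicative field is by groupoid automorphisms, is where I expect the main technical difficulty; in particular, the arrow-level flow must be complete, and I would control it using properness of $H$ and the fact that $\xi$ is $s$- and $t$-related to $v$.

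Set $\Psi_\tau:=\Phi_\tau\circ\psi$. This is a smooth family of groupoid morphisms with $\Psi_0=\psi$, and it can be packaged as a morphism $\mathbb{R}\times G\to H$. On orbit spaces, $\overline{\Phi_\tau}$ is the flow of $Z_\tau$, so
$$\overline{\Psi_\tau}=\overline{\Phi_\tau}\circ\overline{\psi}=\overline{\Phi_\tau}\circ F_0=F_\tau,$$
using uniqueness of integral curves of $\partial_\tau+Z_\tau$. This proves (1). For (2), $\Phi_\tau$ is an isomorphism of Lie groupoids and in particular a Morita fibration, so $\Psi_\tau=\Phi_\tau\circ\psi$ is a composition of Morita fibrations, hence again a Morita map whose base map is a surjective submersion.
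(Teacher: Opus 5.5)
Your outline follows the paper's proof. Your field $\partial_\tau+Z_\tau$ is $\hat V=\hat F_*(\partial_\tau\oplus 0)$ for the suspension $\hat F(\tau,[x])=(\tau,F_\tau([x]))$. It is lifted by Theorem~\ref{thm1} on $\mathbb{R}\times H$, completeness comes from properness of $\pi$, and you set $\Psi_\tau=\Phi_{\tau,0}\circ\psi$.

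\textbf{The gap.} The step whose justification does not work is the one that makes $\partial_\tau+Z_\tau$ a smooth derivation on the image. Lemma~\ref{LemmaLif1}(3) is about maps $\overline{\psi}$ induced by a Lie groupoid morphism whose isotropy map and normal differential are isomorphisms, and it gives a local diffeomorphism between open sets. For $\tau\neq 0$, $F_\tau$ is only a stratified map of orbit spaces, namely an embedding, possibly into a bigger space. Applying the lemma ``locally after choosing slices'' would first require lifting $F_\tau$ to an equivariant map of slice representations, which is a local form of the lifting problem you are trying to solve. Moreover, even if each $F_\tau$ separately were a diffeomorphism onto its image, that would not make $(\tau,y)\mapsto F_\tau^{-1}(y)$ jointly smooth. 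Joint smoothness is what you need for $\partial_\tau+Z_\tau$ to be smooth on $\hat F(\mathbb{R}\times X)$.

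\textbf{The fix.} What is missing is an inverse function theorem intrinsic to orbit spaces. The paper observes that normal transversality of all $F_\tau$ gives normal transversality of $\hat F$. It then applies Schwarz's inverse function theorem \cite[Thm.~1.11]{GSchwarz} to conclude that $\hat F$ is a diffeomorphism onto its image, so $\hat V$ is a stratified vector field there. With that substitution the rest of your argument goes through. Your explicit checks also fill in points the paper leaves implicit: arrow-level completeness via $s$- and $t$-relatedness plus properness of $H$, and $\overline{\Psi_\tau}=F_\tau$ by uniqueness of integral curves.

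\textbf{A smaller point.} Adding elements of $\mathrm{im}(\delta)$ cannot change the $\partial_\tau$-component. For a section $a$ of the Lie algebroid of $\mathbb{R}\times H$, both $a^r-a^l$ and $\rho(a)$ point only in the $H$- and $N$-directions. No correction is needed, though. Since $\mathrm{pr}_{\mathbb{R}}$ is a basic function, the $\partial_\tau$-coefficient of any lift $(\xi,v)$ is the pullback of $\hat V(\tau)$. So it equals $1$ as long as your cutoff extension of $\hat V$ keeps $\hat V(\tau)=1$. To arrange this, apply the cutoff to $W-W(\tau)\,\partial_\tau$ for an extension $W$, not to $W$ itself. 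This matches the paper's remark that $Z$ and $\hat V$ are both related to $\partial_\tau$ under the first projection.
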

\begin{proof}
The strategy of the proof goes as follows. First, we describe the infinitesimal counterpart of $F$, which is determined by $F_0 : X \to Y$ together with a one-parameter family of stratified vector fields $\lbrace V_\tau^F\rbrace_{\tau\in \mathbb{R}} \subset \mathfrak{X}^{\infty}(Y)$. Next, we consider a lifting of each $V_\tau^F$ to a multiplicative vector field $Z_\tau^F$ on $H$. Finally, we integrate both $\psi$ and $Z_\tau^F$ to obtain $\Psi$.

The one-parameter family of stratified vector fields $\lbrace V_\tau^F\rbrace_{\tau\in \mathbb{R}}$ is determined by considering the suspension map of $F$, which is denoted by $\hat{F}:\mathbb{R}\times X\to \mathbb{R}\times Y$ and defined by  $\hat{F}(\tau,x)=(\tau,F_{\tau}(x))$. Observe that the hypothesis that $F_\tau$ is normally transverse implies that $\hat{F}$ is also normally transverse. It then follows from the Inverse Function Theorem \cite[Thm.~1.11]{GSchwarz} that $\hat{F}$ is a diffeomorphism onto its image $\hat{F}(\mathbb{R} \times X)$. Since $\frac{\partial}{\partial \tau}\oplus 0$ is clearly a stratified vector field on $\mathbb{R}\times X$, the vector field $\hat{V}:=\hat{F}_*(\frac{\partial}{\partial \tau}\oplus 0)$ defined as

$$\hat{V}(f)(\tau,F_{\tau}(x))=\frac{\partial f}{\partial \tau}(\tau,F_{\tau}(x)),\qquad f\in C^{\infty}(\hat{F}(\mathbb{R}\times X)),$$ 
is also a stratified vector field on $\hat{F}(\mathbb{R}\times X)$. By Theorem \ref{thm1}, there is a multiplicative vector field $Z$ on $(\mathbb{R}\times H)_{(\textnormal{id}\times \pi)^{-1}(\hat{F}(\mathbb{R}\times X))}$ such that $(\textnormal{id}\times \pi)_*(Z)=\hat{V}$. Observe that both $\hat{V}$ and $Z$ are vector fields related to $\frac{\partial}{\partial \tau}$ via the first component projection. Hence, it follows from \cite[Lem.~6.6]{CMS} that one can assume $Z$ to be globally defined over $\mathbb{R}\times H$, and consequently $\hat{V}$ is also globally defined on $\mathbb{R}\times Y$. In particular, we can rewrite $\hat{V}=\frac{\partial}{\partial \tau}\oplus V_{\tau}^F$ and $Z=\frac{\partial}{\partial \tau}\oplus Z_{\tau}$, where $V_\tau^F\in \mathfrak{X}^{\infty}(Y)$ and $Z_\tau\in \mathfrak{X}_m(H)$ are time-dependent vector fields such that $\pi_*(Z_\tau)=V_{\tau}^F$.

 From \cite[Prop.~4.2.33]{AMR}, we know that the local flow of $Z$ is given by $\varphi_\tau^Z(\varsigma ,p)=(\tau+\varsigma ,\Phi_{\tau+\varsigma ,\varsigma }(p))$, where $\Phi_{\tau,\varsigma }$ is the evolution operator of $Z_\tau$. Since $\hat{V}$ is $\hat{F}$-related to the complete vector field $\frac{\partial}{\partial \tau} \oplus 0$, with flow given by $\hat{F} \circ \varphi_\tau^{\frac{\partial}{\partial \tau}\oplus 0} \circ \hat{F}^{-1}$, it is itself complete. From the completeness of $\hat{V}$ and the fact that $\textnormal{id} \times \pi \colon \mathbb{R} \times N \to \mathbb{R} \times Y$ is a proper map, we conclude that $Z$ is also complete over  $(\mathbb{R} \times H)_{(\textnormal{id} \times \pi)^{-1}(\hat{F}(\mathbb{R} \times X))}$.

 We finish the proof by noting that $\Phi_{\tau,0}$ is groupoid automorphism, since $\Phi_{\tau,0}=\pi_2 \circ \varphi_{\tau}^Z \circ \iota_0$, where $\iota_0(g)=(0,g)$ and $\pi_2(t,g)=g$ for all $g\in G$ and $\tau\in \mathbb{R}$. Therefore, we define the desired groupoid morphism by setting $\Psi_{\tau}:=\Phi_{\tau,0}\circ \psi$, which proves assertion (1). In the case where $\psi$ is a Morita fibration we obtain that $\Psi_{\tau}$ is the composition of a Morita fibration with a groupoid automorphism. 
\end{proof}

\begin{remark}
A couple of observations are in order. On the one hand, it is worth noting that Theorem \ref{thm2} partially extends the homotopy lifting theorem due to Schwarz in \cite{GSchwarz} (see also \cite{Davis,Kankaanrinta}), since we only lift maps on orbit spaces that are homotopic through isotopies. On the other hand, the homotopy lifting property is a common feature of locally trivial fibrations. In fact, Ehresmann’s theorem states that proper submersions between manifolds are locally trivial fibrations, and Thom’s first isotopy lemma \cite[Prop.~11.1]{Mather} ensures that stratified proper submersions are locally trivial. This allows us to conclude that assuming our proper groupoid has a proper orbit projection is not very restrictive. A stacky version of Ehresmann’s theorem can be found, for instance, in \cite{dHF2}.
\end{remark}

Let us now exhibit some consequences of our lifting result. We say that two stratified automorphisms $\phi_0,\phi_1:X\to X$ are \textbf{related}, and we write $\phi_0\simeq \phi_1$, if there is a stratified map $F:\mathbb{R}\times X\to X$ such that the map $F_\tau:=F|_{\{\tau \}\times X}$ is a stratified diffeomorphism for every $\tau\in\mathbb{R}$ and it satisfies $F_0=\phi_0$ and $F_1=\phi_1$. We denote by $\mathrm{Diff}_{0}^{\infty}(X)$ the set of stratified automorphisms of $X$ that are related to the identity and $\mathrm{Diff}_{\mathrm{lift}}^{\infty}(X)$ the group of stratified automorphisms of $X$ that lift to groupoid automorphisms of $G$. Note that Theorem \ref{thm2} implies that $\mathrm{Diff}_{0}^{\infty}(X)$ is a subgroup of $\mathrm{Diff}_{\mathrm{lift}}^{\infty}(X)$.

A \textbf{bisection} of $G\rightrightarrows M$ is a smooth map $\sigma: M \to G$ such that $s\circ\sigma = \mathrm{id}_M$ and $t\circ\sigma: M\to M$ is a diffeomorphism. The set of all bisections of $G$ is denoted by $\mathrm{Bis}(G)$. Such a set turns out to be an infinite-dimensional Lie group (see \cite{SchmedingWockel}), where the product operation given by

$$(\sigma\bullet \sigma')(x) := \sigma((t\circ \sigma')(x))*\sigma'(x), \quad \sigma,\sigma'\in \mathrm{Bis}(G), \quad x\in M.$$

\noindent Each $\sigma \in \mathrm{Bis}(G)$ canonically determines an inner automorphism of $G$, covering $t\circ \sigma$, which is defined by $I_\sigma(g):=\sigma(t(g))*g*i(\sigma(s(g)))$ for all  $g\in G$. In particular, if $\mathrm{Aut}(G)$ stands for the group of automorphism of $G$, then $I:\mathrm{Bis}(G)\to \mathrm{Aut}(G)$ is a homomorphism of groups. Additionally, every $\Phi\in \mathrm{Aut}(G)$ defines a group automorphism of $\mathrm{Bis}(G)$, which is given by $\alpha_{\Phi}(\sigma)=\Phi_1 \circ \sigma \circ \Phi_0^{-1}$ for all $\sigma \in \mathrm{Bis}(G)$. It follows that the quadruple $(\mathrm{Aut}(G),\mathrm{Bis}(G),I,\alpha)$ yields a crossed module of infinite dimensional Lie groups\footnote{The reader is referred to \cite[Sec.~2.2]{HCV23} for a brief review of crossed modules of Lie groups and their relationship with Lie 2-groups.} which completely determines the Lie 2-group of automorphisms of $G$. It is also important to mention that the infinitesimal counterpart of $(\mathrm{Aut}(G),\mathrm{Bis}(G),I,\alpha)$ is the crossed module of Lie algebras $(\mathfrak{X}_m(G),\Gamma(A),\delta,D)$. Especially, if we consider the Lie group exponential $\exp:\Gamma(A)_c\to \mathrm{Bis}(G)$, that is defined by $\exp(a)(x)=\varphi_{1}^{a^r}(1_x)$ for all $x\in M$, and denote the flow of a vector field $X$ by $\varphi_{\tau}^X:=\mathrm{Exp}(\tau X)$, then we get the formula  $\mathrm{Exp}(\tau\delta(a))=I_{\exp(\tau   a)}$ for all $a \in \Gamma(A)$.

The following auxiliary formulas will be used below.

\begin{lemma}\label{product-identities}
Let $(K,R,\rho,\alpha)$ be a crossed module of Lie groups. Then, for each $n\in \mathbb{N}$ the following formulas are satisfied:
	\begin{enumerate}
		\item $k^n\rho(r)=\rho(\alpha_{k^n}(r))k^n$, and
		\item $(k\rho(r))^n=\rho(\prod_{l=1}^n\alpha_{k^l}(r))k^n$
	\end{enumerate}
for all $k\in K$ and $r\in R$.
\end{lemma}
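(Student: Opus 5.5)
Both identities follow from the two crossed module axioms, and the plan is to prove them by induction on $n$. The axioms are:
\begin{enumerate}
\item[(i)] equivariance, $\rho(\alpha_k(r)) = k\rho(r)k^{-1}$ for all $k\in K$, $r\in R$;
\item[(ii)] $\alpha\colon K\to\mathrm{Aut}(R)$ is a group homomorphism, so that $\alpha_{k^n}=(\alpha_k)^n$, and $\rho$ is a group homomorphism.
\end{enumerate}
The Peiffer identity $\alpha_{\rho(r)}(r')=rr'r^{-1}$ is not needed.

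For item (1) there is in fact no need for induction. Axiom (i) applied to the element $k^n\in K$ gives $\rho(\alpha_{k^n}(r))=k^n\rho(r)k^{-n}$. Multiplying this on the right by $k^n$ yields $k^n\rho(r)=\rho(\alpha_{k^n}(r))k^n$.

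For item (2) I would induct on $n$.
\begin{itemize}
\item \emph{Base case $n=1$.} Item (1) with $n=1$ gives $k\rho(r)=\rho(\alpha_k(r))k$, which is exactly the claimed formula.
\item \emph{Inductive step.} Assume $(k\rho(r))^n=\rho(P_n)k^n$, where $P_n:=\prod_{l=1}^n\alpha_{k^l}(r)$. Then
\[(k\rho(r))^{n+1}=(k\rho(r))^n\,k\rho(r)=\rho(P_n)\,k^{n}k\rho(r)=\rho(P_n)\,k^{n+1}\rho(r).\]
Applying item (1) with exponent $n+1$ replaces $k^{n+1}\rho(r)$ by $\rho(\alpha_{k^{n+1}}(r))k^{n+1}$. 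Since $\rho$ is a homomorphism, $\rho(P_n)\rho(\alpha_{k^{n+1}}(r))=\rho(P_n\,\alpha_{k^{n+1}}(r))=\rho(P_{n+1})$. This closes the induction.
\end{itemize}

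The only delicate point is the ordering convention of $\prod_{l=1}^n$ in the noncommutative group $R$. The computation above produces the factors in increasing order of $l$, left to right: $\alpha_k(r)\alpha_{k^2}(r)\cdots\alpha_{k^n}(r)$. I will state this ordering explicitly, since it is the interpretation under which the formula holds as written. Apart from this, no obstacles are expected: the argument is purely algebraic and uses no smooth or infinite-dimensional structure.
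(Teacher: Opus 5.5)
Your proof is correct and complete. Item (1) is just the equivariance axiom applied to the element $k^n$, and your induction for item (2) uses only (1) and the fact that $\rho$ is a homomorphism; as you say, the Peiffer identity is never needed, and in fact multiplicativity of $\alpha$ is not used either.

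The paper states this lemma without proof. Your explicit left-to-right ordering $\alpha_k(r)\alpha_{k^2}(r)\cdots\alpha_{k^n}(r)$ is exactly the one required where the lemma is applied in the Trotter-product step of the proof of Proposition \ref{cor1}, with $k=\varphi^{Z}_{\tau/n}$ and $\rho(r)=I_{\exp(\tau b/n)}$.
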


In these terms, we have the following consequence of Theorem \ref{thm2}.

\begin{proposition}\label{cor1}
	Let $G\rightrightarrows M$ be a proper Lie groupoid such that its canonical projection $\pi:M\to X$ is a proper map. Then:
	\begin{enumerate}
	\item If $\phi_0 \in \mathrm{Diff}_{\mathrm{lift}}^{\infty}(X)$ and $\phi_0\simeq \phi_1$ then  $\phi_1 \in \mathrm{Diff}_{\mathrm{lift}}^{\infty}(X)$. In particular, if $\phi_0\simeq \mathrm{id}_X$ then there exists $\Psi \in \mathrm{Aut}(G)$ such that $\Psi \simeq \mathrm{id}_G$ and $\overline{\Psi}=\phi_0$.
	\item Any two lifts of $\phi_0$ differ by a smooth natural transformation, or, equivalently, there exists a group isomorphism  $\mathrm{Aut}(G)_0/\mathrm{Bis}(G)_0 \simeq \mathrm{Diff}_0^{\infty}(X)$.
	\end{enumerate}
   \end{proposition}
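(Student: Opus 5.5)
For item (1), the plan is to reduce to Theorem \ref{thm2} with $H=G$, $N=M$, $Y=X$, so the properness of $\pi$ is exactly the hypothesis needed there. Since $\phi_0\in \mathrm{Diff}^{\infty}_{\mathrm{lift}}(X)$, there is $\psi\in\mathrm{Aut}(G)$ with $\overline{\psi}=\phi_0$. The relation $\phi_0\simeq\phi_1$ provides a stratified map $F:\mathbb{R}\times X\to X$ with every $F_\tau$ a stratified diffeomorphism and $F_0=\phi_0=\overline{\psi}$. I first check that each $F_\tau$ is a normally transverse embedding. A stratified diffeomorphism induces isomorphisms on Zariski tangent spaces and on tangent spaces of strata, hence on the Zariski normal spaces. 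Theorem \ref{thm2}(1) then gives a groupoid morphism $\Psi:\mathbb{R}\times G\to G$ with $\Psi_0=\psi$ and $\overline{\Psi}_\tau=F_\tau$.

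From the proof of Theorem \ref{thm2} we have $\Psi_\tau=\Phi_{\tau,0}\circ\psi$, where $\Phi_{\tau,0}$ is the time-$\tau$ evolution of the complete time-dependent multiplicative vector field $Z_\tau$. Hence each $\Psi_\tau$ is an automorphism of $G$, and $\Psi_1$ lifts $\phi_1$, so $\phi_1\in\mathrm{Diff}^{\infty}_{\mathrm{lift}}(X)$. For the particular case I take $\phi_0\simeq\mathrm{id}_X$ and $\psi=\mathrm{id}_G$. The path $\tau\mapsto\Psi_\tau$ joins $\mathrm{id}_G$ to an automorphism covering the endpoint diffeomorphism, which gives the isotopy $\Psi\simeq\mathrm{id}_G$. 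If the homotopy is reparametrized, the roles of $\phi_0$ and $\phi_1$ can be swapped, so the endpoint that is not the identity is the one lifted.

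For item (2), I define $\Theta:\mathrm{Aut}(G)_0\to\mathrm{Diff}^{\infty}_0(X)$ by $\Psi\mapsto\overline{\Psi}$. Here $\mathrm{Aut}(G)_0$ denotes the automorphisms isotopic to the identity through a smooth path; projecting such a path gives a path of stratified diffeomorphisms, which shows that $\overline{\Psi}\in\mathrm{Diff}^{\infty}_0(X)$. The map $\Theta$ is clearly a group homomorphism, and it is surjective by item (1). Since $\overline{I_\sigma}=\mathrm{id}_X$, the subgroup $I(\mathrm{Bis}(G)_0)$ lies in the kernel.

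The main obstacle is the reverse inclusion: if $\Psi\in\mathrm{Aut}(G)_0$ satisfies $\overline{\Psi}=\mathrm{id}_X$, then $\Psi=I_\sigma$ for some $\sigma\in\mathrm{Bis}(G)_0$. I would work infinitesimally. Let $\Psi_\tau$ be a path from $\mathrm{id}_G$ to $\Psi$, and let $\xi_\tau\in\mathfrak{X}_m(G)$ be its time-dependent generator. Then $\pi_*(\xi_\tau)$ generates the projected path $\overline{\Psi}_\tau$. This projected path need not be constant, so I would first modify it. Using item (1), I lift a null-isotopy in $\mathrm{Diff}^\infty_0(X)$ of the loop $\overline{\Psi}_\tau$, and compose it with $\Psi_\tau$ so that the projected path becomes identically $\mathrm{id}_X$. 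After this change $\pi_*(\xi_\tau)=0$ for every $\tau$. Proposition \ref{prop:exact-sequence} then gives a smooth family $a_\tau\in\Gamma(A)$ with $\xi_\tau=\delta(a_\tau)$; smooth dependence on $\tau$ comes from making a smooth choice of the splitting in that proof.

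Next I integrate the time-dependent section $a_\tau$ to a path of bisections $\sigma_\tau$ by solving $\frac{d}{d\tau}\sigma_\tau=a_\tau^r\circ\sigma_\tau$. Completeness comes from the properness of $\pi$ together with compactness of the orbits. The identity $\mathrm{Exp}(\tau\delta(a))=I_{\exp(\tau a)}$, in its time-dependent form, then gives $\Psi_\tau=I_{\sigma_\tau}$. Lemma \ref{product-identities} is the tool for reorganizing products of the form $k\rho(r)$ when the path is assembled from pieces. The outcome is the isomorphism $\mathrm{Aut}(G)_0/\mathrm{Bis}(G)_0\simeq\mathrm{Diff}^{\infty}_0(X)$.

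Finally, if $\Psi$ and $\Psi'$ are two lifts, then $\Psi'\circ\Psi^{-1}=I_\sigma$ for some bisection $\sigma$. The map $x\mapsto\sigma(\Psi_0(x))$ is then a smooth natural transformation from $\Psi$ to $\Psi'$, which is the equivalent formulation in item (2).
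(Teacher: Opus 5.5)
There is a genuine gap in item (2), exactly at the step you flagged as the main obstacle. Your item (1) is the paper's argument (apply Theorem~\ref{thm2} with $H=G$) and is fine.

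\textbf{The gap.} To get $\ker\Theta\subseteq I(\mathrm{Bis}(G)_0)$ you ``lift a null-isotopy'' of the loop $\tau\mapsto\overline{\Psi}_\tau$. This fails for two reasons. First, such a loop need not be contractible in $\mathrm{Diff}^{\infty}_0(X)$. Second, Theorem~\ref{thm2} lifts only one-parameter isotopies. If you lift just the loop, its endpoint is again an automorphism covering $\mathrm{id}_X$, which is an element of the very kernel you are trying to control, so the argument becomes circular.

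\textbf{The inclusion itself is false in general.} Let $G=(\mathbb{R}\times S^1)/\mathbb{Z}$, where $n$ acts by $(x,z)\mapsto(x+n,z^{(-1)^n})$, viewed as a bundle of circle groups over $M=\mathbb{R}/\mathbb{Z}$. It is proper and has a single Morita type, with $X=M$ and $\pi=\mathrm{id}$ proper. The maps $\Psi_\tau[x,z]=[x+\tau,z]$ form a smooth path in $\mathrm{Aut}(G)$ starting at $\mathrm{id}_G$. Its endpoint $\Psi_1[x,z]=[x+1,z]=[x,\bar z]$ is fibrewise inversion and covers $\mathrm{id}_X$. The isotropy groups are abelian, so $I_\sigma=\mathrm{id}_G$ for every bisection $\sigma$. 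Hence $\Psi_1\in\ker\Theta$ but $\Psi_1\notin I(\mathrm{Bis}(G))$. Likewise, a natural transformation $\eta:\mathrm{id}_G\Rightarrow\Psi_1$ would force $g^{-1}=\eta(x)g\eta(x)^{-1}=g$ for all $g\in G_x$, so your last paragraph about two arbitrary lifts also fails. In this example $\mathrm{im}(\delta)=0$, and $\overline{\Psi}_\tau$ is the full rotation loop, which is not contractible in $\mathrm{Diff}_0(S^1)$. The exactness in Proposition~\ref{prop:exact-sequence} integrates only along paths, and non-contractible loops in $\mathrm{Diff}^{\infty}_0(X)$ can carry non-inner monodromy.

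\textbf{What the paper does instead.} Its proof of (2) stays path-wise, so it never meets this problem. It compares two lifts $\Psi_\tau,\Psi'_\tau$ of the \emph{same} isotopy, starting at the same $\psi$. Their generators $Z,Z'$ project to the same time-dependent stratified vector field, so $Z'=Z+\delta(b)$ by Proposition~\ref{prop:exact-sequence}. The paper then uses completeness of $\delta(b_\tau)$ (the orbits are compact), the Trotter product formula and Lemma~\ref{product-identities} to get $\Phi^{Z'}_{\tau,0}=I_{\sigma_\tau(Z,b)}\circ\Phi^{Z}_{\tau,0}$.

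Your constant-projection argument is a clean substitute for the Trotter computation in this setting. Apply it to $\Lambda_\tau=\Phi^{Z'}_{\tau,0}\circ(\Phi^{Z}_{\tau,0})^{-1}$. Its generator $Z'_\tau-(\Lambda_\tau)_*Z_\tau$ is multiplicative and projects to $0$, because $\overline{\Lambda}_\tau=\mathrm{id}_X$. It therefore equals $\delta(a_\tau)$ and integrates to bisections with $\Lambda_\tau=I_{\sigma_\tau}$.

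However, both this and the paper's argument prove only the statement for lifts of a common isotopy, as in the corollary following the proposition. Neither proves injectivity of $\mathrm{Aut}(G)_0/\mathrm{Bis}(G)_0\to\mathrm{Diff}^{\infty}_0(X)$, and the example above shows that injectivity fails.
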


  \begin{proof} 
  	
  Clearly, statement (1) follows directly from Theorem \ref{thm2}. Let us focus on showing statement (2).  Pick $\phi_0\in \mathrm{Diff}_0^{\infty}(X)$ and suppose that $\Psi$ and $\Psi'$ are two lifts for $\phi_0$. We consider their infinitesimal generators $Z = \frac{\partial}{\partial \tau} \oplus Z_\tau$ and $Z' = \frac{\partial}{\partial \tau} \oplus Z'_\tau$, as well as $V = \frac{\partial}{\partial \tau} \oplus V_\tau$, following the notation in the proof of Theorem \ref{thm2}. It follows from Proposition \ref{prop:exact-sequence} that $Z'=\frac{\partial}{\partial \tau}\oplus Z'_\tau=\frac{\partial}{\partial \tau}\oplus( Z_\tau+\delta(b_\tau))=Z+\delta(b)$ where $b:\mathbb{R}\times M\to \mathbb{R}\times A$ is defined by $b(\tau,x)=(\tau,b_\tau(x))$ with $b_\tau\in \Gamma(A)$ for every $\tau\in \mathbb{R}$.
    
 Observe that for every $\tau\in \mathbb{R}$, the vector field $\rho(b_\tau)$ is tangent to the groupoid orbits of $G$. As these orbits are compact, one gets that the maximal integral curves of $\rho(b_\tau)$ are defined for all time. In other words, $\rho(b_\tau)$ is complete, which in turn implies that the vector field $\delta(b_\tau)$ is complete as well. Hence, according to the Trotter product formula for the flow of $Z'=Z+\delta(b)$ (consult \cite[Cor.~4.1.27]{AMR}), one has that:
\begin{align*}
    \varphi_\tau^{Z'}(\varsigma,g)=&\lim_{n\to \infty}\left(\varphi_{\tau/n}^{Z}\circ \varphi_{\tau/n}^{\delta(b)}\right)^n(\varsigma,g)=\lim_{n\to \infty}\left(\varphi_{\tau/n}^{Z}\circ I_{\exp(\tau/n b)}\right)^n(\varsigma,g)\\
    =&\lim_{n\to \infty}(I_{(\prod_{k=1}^{n}\alpha_{\varphi_{k\tau/n}^{Z}}(\exp(\tau/n b)))}\circ \varphi_{\tau}^{Z})(\varsigma,g),
\end{align*}
where we have used the auxiliary relations described in Lemma \ref{product-identities} to obtain the last equality. In consequence, the latter implies that

$$\varphi_\tau^{Z'}(\varsigma,g)=(I_{(\lim_{n\to \infty}\prod_{k=1}^{n}\exp(\mathrm{Lie}(\varphi_{k\tau/n}^{Z})_*(b))\cdot \frac{\tau}{n})}\circ \varphi_{\tau}^{Z})(\varsigma,g).$$
If we rewrite this equation in terms of the time-ordered exponential\footnote{The \textbf{time-ordered exponential} $\mathcal{T}\exp$ of a function $f$ is by definition the expression $$\mathcal{T}\exp(\int_0^\tau f(\theta)d\theta)=\mathcal{T}\exp(\lim_{n\to \infty}\sum_{k=1}^nf(\frac{k}{n}\tau)\frac{\tau}{n}):=\lim_{n\to \infty}\prod_{k=1}^n\exp(f(\frac{k}{n}\tau)\frac{\tau}{n}).$$} $\mathcal{T}\exp$ one gets\footnote{The reader may compare these formulas with those in \cite[Sec.~2.3.2]{BaezSchreiber04} and \cite[Sec.~1.4]{Igusa09}.} that:
\begin{align*}
     \varphi_\tau^{Z'}(\varsigma,g)=(I_{\mathcal{T}\exp(\int_0^\tau\mathrm{Lie}(\mathrm{Exp}(\theta Z))_*(b) d\theta)}\circ \varphi_{\tau}^{Z})(\varsigma,g), \quad  (\varsigma,g)\in \mathbb{R}\times G.
\end{align*}

 Observe that $\mathcal{T}\exp(\int_0^\tau\mathrm{Lie}(\mathrm{Exp}(\theta Z))_*(b) d\theta)$ determines a bisection of $\mathbb{R}\times G$. Since every bisection $\sigma\in \mathrm{Bis}(\mathbb{R}\times G)$ has the form $\sigma:\mathbb{R}\times M\to \mathbb{R}\times G$ with  $\sigma(\tau,x)=(\tau,\sigma_{\tau}(x))$ and $\sigma_{\tau}\in \mathrm{Bis}(G)$ for every $\tau\in \mathbb{R}$, we denote the corresponding time-dependent bisection of $\mathcal{T}\exp(\int_0^\tau\mathrm{Lie}(\mathrm{Exp}(\theta Z))_*(b) d\theta)$ by $\sigma_\tau(Z,b)$. After  considering the corresponding evolution operators we have that
$$\Phi_{\tau+\varsigma,\varsigma}^{Z'}(g)=(I_{\sigma_{\tau+\varsigma}(Z,b)}\circ \Phi_{\tau+\varsigma,\varsigma}^Z)(g).$$ 

\noindent Thus, taking $\varsigma=0$ one sees that $\Phi_{\tau,0}^{Z'}=I_{\sigma_\tau(Z,b)}\circ \Phi_{\tau,0}^Z$. In other words, $[\Phi_{\tau,0}^{Z'}]=[\Phi_{\tau,0}^{Z}]$ in the quotient $\mathrm{Aut}_0(G)/\mathrm{Bis}_0(G)$. Moreover, as consequence of \cite[Prop.~4.1]{HCV23}, we know that there exists a smooth natural transformation $\Phi_{\tau,0}^{Z'}\stackrel{\alpha_\tau(Z,b)}{\Rightarrow}\Phi_{\tau,0}^Z$ for every $\tau\in \mathbb{R}$, which finishes the proof.
\end{proof}

Under the hypotheses of Theorem \ref{thm2}, the following consequence holds:

  \begin{corollary}
Any two lifts for $F$ differ by a smooth natural transformation.
  \end{corollary}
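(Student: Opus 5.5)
The plan is to repeat the argument of Proposition \ref{cor1}(2) for a general isotopy $F$ under the hypotheses of Theorem \ref{thm2}. Let $\Psi$ and $\Psi'$ be two lifts of $F$. I first reduce to the case where both lifts come from the construction in the proof of Theorem \ref{thm2}. Write $\Psi_\tau=\Phi^{Z}_{\tau,0}\circ\psi$ and $\Psi'_\tau=\Phi^{Z'}_{\tau,0}\circ\psi$. Here $Z=\frac{\partial}{\partial\tau}\oplus Z_\tau$ and $Z'=\frac{\partial}{\partial\tau}\oplus Z'_\tau$ are multiplicative vector fields on $\mathbb{R}\times H$, and both project under $(\mathrm{id}\times\pi)_*$ to the same stratified vector field $\hat V=\frac{\partial}{\partial\tau}\oplus V^F_\tau$ determined by $F$. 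If a lift is given abstractly, I would differentiate it in $\tau$ to get such a time-dependent multiplicative generator along the image of $\psi$. I would then extend this generator as in the proof of Theorem \ref{thm2}, using \cite[Lem.~6.6]{CMS}.

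Next I apply Proposition \ref{prop:exact-sequence} to $\mathbb{R}\times H$, or fibrewise in $\tau$ to $H$. Since $\pi_*(Z_\tau-Z'_\tau)=0$, there is a smooth time-dependent section $b_\tau\in\Gamma(A_H)$ with $Z'_\tau=Z_\tau+\delta(b_\tau)$. Smooth dependence on $\tau$ follows because the splitting used in that proof can be chosen once and for all. Properness of $\pi:N\to Y$ makes the orbits of $H$ compact, so $\rho(b_\tau)$ and $\delta(b_\tau)$ are complete. I then use the Trotter product formula together with Lemma \ref{product-identities} and $\mathrm{Exp}(\tau\delta(a))=I_{\exp(\tau a)}$, exactly as in Proposition \ref{cor1}. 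This gives
\[
\Phi^{Z'}_{\tau,0}=I_{\sigma_\tau(Z,b)}\circ\Phi^{Z}_{\tau,0},
\]
where $\sigma_\tau(Z,b)\in\mathrm{Bis}(H)$ is the time-ordered exponential bisection, depending smoothly on $\tau$.

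Finally, composing with $\psi$ gives $\Psi'_\tau=I_{\sigma_\tau}\circ\Psi_\tau$. An inner automorphism by a bisection is naturally isomorphic to the identity through the transformation $x\mapsto\sigma_\tau(x)$, so the map $x\mapsto\sigma_\tau(\Psi_\tau(x))$ for $x\in M$ is a natural transformation $\Psi_\tau\Rightarrow\Psi'_\tau$. It is smooth in $(\tau,x)$, which is the content of \cite[Prop.~4.1]{HCV23}.

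The main obstacle is the first reduction. An arbitrary lift need not be of the form $\Phi_{\tau,0}\circ\psi$ for a globally defined multiplicative flow on $H$, because $F_\tau$ is only an embedding and $\psi$ need not be surjective. To handle this, I would work on the saturated open set $(\mathrm{id}\times\pi)^{-1}(\hat F(\mathbb{R}\times X))$. There I would define the generator along $\Psi$ by $\partial_\tau\Psi_\tau\circ\Psi_\tau^{-1}$, which is available when $\Psi_\tau$ is a Morita fibration onto its image. In the general case I would compare $\partial_\tau\Psi$ and $\partial_\tau\Psi'$ directly as multiplicative sections along $\Psi$, and apply the kernel argument of Proposition \ref{prop:exact-sequence} to their difference.
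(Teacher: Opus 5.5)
Your proposal is essentially the paper's proof: the paper likewise takes the infinitesimal generators $Z,Z'$ of the two lifts and runs the argument of Proposition \ref{cor1} (kernel of $\pi_*$ via Proposition \ref{prop:exact-sequence}, the Trotter formula, the time-ordered exponential bisection, and \cite[Prop.~4.1]{HCV23}) to obtain a smooth natural transformation between $\Phi^{Z'}_{\tau,0}$ and $\Phi^{Z}_{\tau,0}$, and then whiskers it with $\psi$. The reduction you single out as the main obstacle is tacitly assumed in the paper, which treats every lift as $\Phi_{\tau,0}\circ\psi$ for a multiplicative generator on $H$, so you match the paper without resolving it; note, however, that your sketch for the general case is not yet an argument, because $\partial_\tau\Psi'_\tau$ is a section along $\Psi'_\tau$ rather than $\Psi_\tau$, and $\Psi_\tau^{-1}$ is not defined when $\Psi_\tau$ is a non-injective fibration.
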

  \begin{proof}
   Indeed, let $\Psi', \Psi$ be two lifts for $F$ and let $Z_\tau', Z_\tau$ denote their infinitesimal generators. Recall that $\Psi_0=\Psi'_0=\psi$. Hence, from Proposition \ref{cor1} it follows that there exists a time-dependent smooth natural transformation $\alpha_\tau(Z,b)$ such that $\Phi_{\tau,0}^{Z'}\stackrel{\alpha_\tau(Z,b)}{\Rightarrow}\Phi_{\tau,0}^Z$. This immediately implies that $\Psi_\tau'\stackrel{\alpha_\tau(Z,b)\circ \psi_0}{\Rightarrow}\Psi_\tau$ for all $\tau\in \mathbb{R}$.
  \end{proof}

We conclude the section by noting that, following the approach of Davis in \cite{Davis}, a proper Lie groupoid with proper orbit projection can be viewed as a collection of generalized principal bundles
$\{G_{(\alpha)}\rightrightarrows M_{(\alpha)}\to X_{\alpha}\}$ that are smoothly glued together into a stratified locally trivial fibration. From this perspective, we may interpret $I_{\sigma}$ as a stratified gauge transformation and $\delta(a)$ as a stratified gauge field, for any $\sigma\in \mathrm{Bis}(\sigma)$ and $a \in \Gamma(A)$.

\end{document}